\newtheorem{prop}{Proposition}
\begin{document}

\newtheorem{conjecture}{Conjecture}
\newtheorem{theorem}{Theorem}
\newtheorem{lemma}{Lemma}
\newtheorem{corollary}{Corollary}
\numberwithin{equation}{section}
\newcommand{\dif}{\mathrm{d}}
\newcommand{\Res}{\mathrm{Res}}
\newcommand{\intz}{\mathbb{Z}}
\newcommand{\ratq}{\mathbb{Q}}
\newcommand{\natn}{\mathbb{N}}
\newcommand{\comc}{\mathbb{C}}
\newcommand{\rear}{\mathbb{R}}
\newcommand{\prip}{\mathbb{P}}
\newcommand{\uph}{\mathbb{H}}
\newcommand{\fief}{\mathbb{F}}
\newcommand{\majorarc}{\mathfrak{M}}
\newcommand{\minorarc}{\mathfrak{m}}
\newcommand{\sings}{\mathfrak{S}}
\newcommand{\dds}{\frac{\dif}{\dif s}}

\renewcommand{\d}{\mathrm{d}}
\renewcommand{\phi}{\varphi}
\newcommand{\PP}{\mathbb{P}}
\renewcommand{\AA}{\mathbb{A}}
\newcommand{\FF}{\mathbb{F}}
\newcommand{\ZZ}{\mathbb{Z}}
\newcommand{\ZZp}{\mathbb{Z}_{\mathrm{prim}}}
\newcommand{\NN}{\mathbb{N}}
\newcommand{\QQ}{\mathbb{Q}}
\newcommand{\RR}{\mathbb{R}}
\newcommand{\CC}{\mathbb{C}}

\newcommand{\XX}{\boldsymbol{X}}
\newcommand{\YY}{\boldsymbol{Y}}
\newcommand{\UU}{\boldsymbol{U}}

\renewcommand{\leq}{\leqslant}
\renewcommand{\le}{\leqslant}
\renewcommand{\geq}{\geqslant}
\renewcommand{\ge}{\geqslant}
\renewcommand{\bar}{\overline}

\newcommand{\ma}{\mathbf}

\newcommand{\m}{\mathbf{m}}
\newcommand{\M}{\mathbf{M}}
\newcommand{\y}{\mathbf{y}}
\renewcommand{\c}{\mathbf{c}}
\newcommand{\z}{\mathbf{z}}
\renewcommand{\b}{\mathbf{b}}
\newcommand{\la}{\lambda}
\newcommand{\al}{\alpha}

\newcommand{\bet}{\boldsymbol{\eta}}
\newcommand{\bnu}{\boldsymbol{\nu}}
\newcommand{\bla}{\boldsymbol{\lambda}}
\newcommand{\bal}{\boldsymbol{\alpha}}
\newcommand{\x}{\mathbf{x}}
\newcommand{\ve}{\varepsilon}

\theoremstyle{definition}
\newtheorem*{ack}{Acknowledgements}
\newtheorem*{notat}{Notation}

\title{Averages of shifted convolutions of $d_3(n)$}

\author{S.\ Baier}
\address{School of Mathematics\\
University of Bristol\\ Bristol\\ BS8 1TW\\ United Kingdom}
\email{stephan.baier@bristol.ac.uk}
\author{T.\ D.\ Browning}
\address{School of Mathematics\\
University of Bristol\\ Bristol\\ BS8 1TW\\ United Kingdom}
\email{t.d.browning@bristol.ac.uk}
\author{G.\ Marasingha}
\address{School of Mathematics\\
University of Bristol\\ Bristol\\ BS8 1TW\\ United Kingdom}
\email{gihan.marasingha@bristol.ac.uk}
\author{L.\ Zhao}
\address{Division of Math. Sci. \\
School of Phys. \& Math. Sci. \\
Nanyang Technological University \\
Singapore 637371}
\email{lzhao@pmail.ntu.edu.sg}

\date{\today}

\begin{abstract}
We investigate the first and second moments of shifted convolutions of
the generalised divisor function $d_3(n)$. 
\end{abstract}

\subjclass{11N37 (11M06, 11P55)}

\maketitle 
\tableofcontents

\section{Introduction}

For any positive integer $k$ let $d_k(n)$ denote the generalised divisor function,
defined to be the Dirichlet coefficients of $\zeta(s)^k$ in the
half-plane $\Re(s)>1$. The study of shifted convolution sums
$$
D_k(N, h) := \sum_{N<n \le 2N} d_k(n) d_k(n+h)
$$
is of central importance in the analytic theory of
numbers.  The case $k=1$ is trivial and for $k=2$ we have known since
work of Ingham \cite{ing} that
$$
D_2(N,h) \sim \frac{6}{\pi^2}\sigma_{-1}(h)N \log^2 N
$$
as $N \rightarrow \infty$, for given $h \in \NN$, where
$\sigma_{-1}(h):=\sum_{j\mid h} j^{-1}$. 
Several authors have since 
revisited this problem, achieving asymptotic formulae 
with $h$ in an increasingly
large range compared to $N$. The best results in the literature are
due to Duke, Friedlander and Iwaniec \cite{dfi} and to 
Meurman \cite{meurman-d2}.

In general it is expected that 
$D_k(N, h)$ should be asymptotic to $c_{k,h} N \log^{2k-2}N$, for a
suitable constant $c_{k,h}>0$, uniformly for $h$ in some range.
However such a description has not yet been forthcoming 
for any $k\geq 3$, even when $h$ is fixed. 
One motivation for studying the sums $D_k(N,h)$ is the deep connection
that they enjoy with 
the asymptotic behaviour of  moments
$$
I_k(T):=\int\limits_0^T \left|\zeta\left(\frac{1}{2}+it\right)\right|^{2k} \d
t, 
$$
as $T\rightarrow \infty$.  It is commonly believed that
$
I_k(T)\sim c_k T(\log T)^{k^2},
$ 
as $T\rightarrow \infty$, for a
suitable constant $c_k>0$. Keating and Snaith
\cite{ks} have produced a conjectural interpretation of $c_k$
using random matrix theory for Gaussian unitary ensembles. 
Just as for the sums $D_k(N,h)$, we have 
only succeeded in producing an asymptotic formula 
for $I_k(T)$ when $k=1$
(see Hardy and Littlewood \cite{hl}) or $k=2$ (see Ingham \cite{ingham}). 
The relationship between moments of the Riemann zeta function and 
the shifted convolution sums $D_k(N,h)$ has been explored extensively by 
Ivi\'c \cite{ivic-art, ivic-art'}, 
and more recently by Conrey and Gonek
\cite{conrey-gonek}.  

Fixing attention on the case $k=3$, in which setting we write
$D(N,h)=D_3(N,h)$, our aim in this paper is to lend some theoretical
support in favour of its expected asymptotic behaviour.
If $\varphi(n)$ denotes the Euler totient function, then we set
$$
H(s,q):= \sum\limits_{d\mid q} \frac{\mu(d)}{\varphi(d)}\cdot d^s\cdot
G_{q/d,d}(s), 
$$
with 
\begin{equation} \label{Gkddef}
G_{k,d}(s):=\sum\limits_{e\mid d} \frac{\mu(e)}{e^s}\cdot g(s,ek)
\end{equation}
and 
$$
g(s,q):=\prod\limits_{p\mid q} \left(\left(1-p^{-s}\right)^3 \sum\limits_{j=0}^{\infty} \frac{d_3\left(p^{j+v_p(q)}\right)}{p^{js}}\right).
$$
Here and after, $v_p(q)$ denotes the $p$-adic valuation of $q$.  Next we define
\begin{equation} \label{Pdefini}
\begin{split}
P(x,q)&:=
\frac{1}{2\pi i} \int\limits_{|s|=1/8} \zeta^3(s+1) H(s+1,q)
\left(\frac{x}{q}\right)^s \dif s\\ 
&=
\Res_{s=0} \zeta^3(s+1) H(s+1,q) \left(\frac{x}{q}\right)^s,
\end{split}
\end{equation}
by the residue theorem.  Let 
$c_q(h)=\sum_{d\mid h,q} d\mu(q/d)$ be the Ramanujan sum and 
let $\ve>0$.  Then the work of 
Conrey and Gonek \cite[Eq.\ (30) and 
Conjecture~3]{conrey-gonek} predicts that
\begin{equation} \label{congonrew}
D(N,h)=\int\limits_{N}^{2N} \mathfrak{S}(x,h) \dif x+O(N^{1/2+\ve}),
\end{equation}
uniformly for $1\leq h\leq x^{1/2}$, where
\begin{equation}
  \label{eq:dijon}
  \mathfrak{S}(x,h):=\sum\limits_{q=1}^{\infty} \frac{c_q(h)}{q^2}\cdot P(x,q)^2. 
\end{equation}

Let
$$
\Delta(N,h):=D(N,h)-\int\limits_{N}^{2N} \mathfrak{S}(x,h) \dif x.
$$
We will lend support to \eqref{congonrew} by considering 
both first and second moments of 
$\Delta(N,h)$, as $h$ varies over some range that is small compared to
$N$. 
Beginning with the former, we will
establish the following result.

\begin{theorem} \label{mainres1} Assume
  that $1\le H\le N$. Then 
$$
\sum\limits_{h\le H} \Delta(N,h)\ll
\left(H^2+H^{1/2}N^{13/12} \right)N^{\varepsilon}.
$$
\end{theorem}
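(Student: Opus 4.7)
The plan is to swap the order of summation, writing
$$
\sum_{h\le H} D(N,h) \;=\; \sum_{N<n\le 2N} d_3(n)\bigl(T(n+H)-T(n)\bigr),
$$
where $T(x):=\sum_{m\le x} d_3(m)$. This converts the question into one about a weighted sum of $d_3$ over short intervals of length $H$. Using the classical expansion $T(x)=xP_2(\log x)+E_3(x)$, with $P_2$ the degree-two polynomial arising from the triple pole of $\zeta^3$ at $s=1$, the problem splits into a main-term contribution and a remainder
$$
\mathcal{E} := \sum_{N<n\le 2N} d_3(n)\bigl(E_3(n+H)-E_3(n)\bigr).
$$

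The first task is to verify that the main-term piece $\sum_n d_3(n)\int_n^{n+H}(tP_2(\log t))'\,\dif t$ agrees with $\sum_{h\le H}\int_N^{2N}\mathfrak{S}(x,h)\,\dif x$. This rests on the identity $H(s,1)=1$, which identifies $P(x,1)$ with $\Res_{s=0}\zeta^3(s+1)x^s$, together with the standard bound $\sum_{h\le H} c_q(h) \ll \min(H,q)$, which makes the $q\ge 2$ contribution to $\sum_h\mathfrak{S}$ negligible. Taylor-expanding the integral $\int_n^{n+H}(tP_2(\log t))'\,\dif t$ as $H$ times a polynomial in $\log n$, plus an error of order $H^2/n$, and summing this error against $d_3(n)$, contributes the $H^2 N^{\ve}$ term in the theorem.

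For the remainder $\mathcal{E}$, Cauchy--Schwarz combined with $\sum_{n\le N} d_3(n)^2 \ll N(\log N)^{8}$ reduces matters to the short-interval mean square
$$
\sum_{N<n\le 2N}\bigl(E_3(n+H)-E_3(n)\bigr)^2 \ll H\,N^{7/6+\ve},
$$
which, upon taking the square root and multiplying by $N^{1/2+\ve}$, delivers the desired $H^{1/2}N^{13/12+\ve}$. The mean-square bound in turn follows from Voronoi summation for $d_3$: the difference expands as an oscillatory sum over $m$ with phase of order $m^{1/3}H/n^{2/3}$, which is $O(1)$ for $m \ll (n/H)^3$ and must be controlled by cubic exponential sum estimates for larger $m$. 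The principal obstacle is exactly this mean-square estimate; the optimal balance between the two regimes in the Voronoi expansion is what forces the exponent $7/6$, and any improvement would require subconvex input into cubic exponential sums beyond what is presently available.
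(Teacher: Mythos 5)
Your opening step, $\sum_{h\le H}D(N,h)=\sum_{N<n\le 2N}d_3(n)\bigl(T(n+H)-T(n)\bigr)$, coincides with the paper's \eqref{change}, and your main-term analysis (the $H^2N^\varepsilon$ error from Taylor expansion, the identification $H(s,1)=1$ so that $P(x,1)=\Res_{s=0}\zeta^3(s+1)x^s$, and the use of Carmichael's identity to kill the $q\ge2$ contribution to $\sum_h\mathfrak{S}(x,h)$) is in line with sections \ref{s:G} and \ref{s:F}.

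The gap is in your treatment of $\mathcal{E}=\sum_n d_3(n)\bigl(E_3(n+H)-E_3(n)\bigr)$. Applying Cauchy--Schwarz on the $n$-sum forces you to estimate $\sum_{N<n\le 2N}\bigl|E_3(n+H)-E_3(n)\bigr|^2$, and to recover the theorem you need this to be $\ll HN^{7/6+\varepsilon}$. You do not prove this, and it is not a known result. The strongest short-interval mean-square bound of this type — Ivi\'c's Corollary~1, quoted in Lemma~\ref{mikawalemma2} of the paper and resting on the same sixth-moment input on $\Re s=7/12$ — gives only $\ll\Delta^2N^{1-\delta}$ for $N^{1/6+\eta}\le\Delta\le N^{1-\eta}$, with a small $\delta=\delta(\eta)$. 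For $H>N^{1/6+\delta}$ (which is most of the range where Theorem~\ref{mainres1} is nontrivial), that is \emph{weaker} than your claimed $HN^{7/6+\varepsilon}$, and feeding it into your Cauchy--Schwarz gives only $\mathcal{E}\ll HN^{1-\delta/2+\varepsilon}$, not $H^{1/2}N^{13/12+\varepsilon}$. You flag this mean-square estimate yourself as ``the principal obstacle,'' and the gesture at Voronoi plus cubic exponential sums does not close it — indeed, as you note, the required subconvex input is unavailable. (A small technical slip: the non-oscillatory range in the Voronoi expansion is $m\ll n^2/H^3$, coming from $m^{1/3}H/n^{2/3}\ll1$, not $m\ll(n/H)^3$.)

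The paper sidesteps this bottleneck. Rather than Cauchy--Schwarz on the $n$-sum — which discards the correlation between $d_3(n)$ and the fluctuation $E_3(n+H)-E_3(n)$ — it applies Perron a \emph{second} time, to $\sum_{N<n\le x}d_3(n)n^s$, producing a double contour integral; Cauchy--Schwarz is then applied in the integration variables $t,t_1$, combined with the sixth moment on $\Re s=7/12$, giving $A(N,H)\ll HN^{1/6+\varepsilon}T$. Balancing against the first Perron error $N^{2+\varepsilon}/T$ by taking $T=H^{-1/2}N^{11/12}$ yields exactly $H^{1/2}N^{13/12+\varepsilon}$. The underlying arithmetic input (the sixth moment of $\zeta$) is the same as in your route, but the paper's organization of the Cauchy--Schwarz avoids the loss inherent in passing to a real-variable mean square of $E_3$.
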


The exponents appearing in this estimate can be improved slightly  for
certain ranges of $H$. We shall not pursue this here however. 
For $N$ in the range $N^{1/6+\varepsilon}\le H\le N^{1-\varepsilon}$,
Theorem \ref{mainres1} gives an asymptotic formula for the average
\begin{equation} \label{Gaverage}
G(N,H):=\sum\limits_{h\le H} D(N,h).
\end{equation} 
It is interesting to relate Theorem 1 to work of Ivi\'c
\cite[Lemma 6]{ivic-art'} who deduces the upper bound
$$
I_3(T)\ll T^{1+\ve}+T^{(\al+3\beta-1)/2+\ve}
$$
for the sixth moment of the Riemann zeta function on the critical line,
where $\alpha,\beta\in [0,1]$ are constants such that
$\alpha+\beta\geq 1$
and an asymptotic formula of the shape
$$
\sum_{h \leq H} \Delta(N,h)
\ll H^\alpha N^{\beta+\varepsilon}
$$
is valid for $1\leq H\leq N^{1/3}$.
Theorem 1 affords the choices $\alpha=1/2$ and $\beta=13/12$, which
yields $I_3(T)\ll T^{11/8+\ve}$.  Unfortunately this does 
not give any improvement over the well-known bound for $I_3(T)$ with 
exponent $5/4+\ve$.

Turning to second moments we will establish the following result. 

\begin{theorem} \label{mainres2}
Assume
that $N^{1/3+\varepsilon}\le H\le N^{1-\varepsilon}$. Then
there exists $\delta>0$ such that 
$$
\sum\limits_{h\le H} \left| \Delta(N,h) \right|^2 \ll HN^{2-\delta}. 
$$
\end{theorem}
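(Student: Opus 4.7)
My plan is to apply a version of the Linnik dispersion method. Writing $M(N,h):=\int_N^{2N}\mathfrak{S}(x,h)\,\d x$ for the expected main term of $D(N,h)$, I would open the square as
\begin{equation*}
\sum_{h\le H}|\Delta(N,h)|^2 = \Sigma_1 - 2\Sigma_2 + \Sigma_3,
\end{equation*}
where $\Sigma_1 := \sum_{h\le H} D(N,h)^2$, $\Sigma_2 := \sum_{h\le H} D(N,h)M(N,h)$, and $\Sigma_3 := \sum_{h\le H}M(N,h)^2$. The strategy is to show that $\Sigma_1$, $\Sigma_2$ and $\Sigma_3$ share the same leading asymptotic, so that the alternating combination telescopes to the desired bound of size $HN^{2-\delta}$.

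For $\Sigma_1$, I expand the squares and swap the order of summation:
\begin{equation*}
\Sigma_1 = \sum_{n_1,n_2\in(N,2N]} d_3(n_1)d_3(n_2) \sum_{h\le H} d_3(n_1+h) d_3(n_2+h).
\end{equation*}
After the substitution $k=n_2-n_1$ and $m=n_1+h$, the innermost sum becomes a shifted convolution of $d_3$ of length $H$ with shift $k$, for each $|k|\le N$. For each such $k$ I would isolate the expected main term (formed from the singular series $\mathfrak{S}(m,k)$ of \eqref{eq:dijon}) using the same local analysis as in the proof of Theorem~\ref{mainres1}. An analogous manipulation reduces $\Sigma_2$ to a sum against $M(N,h)$, while $\Sigma_3$ is purely analytic and can be evaluated directly using the definition \eqref{Pdefini} of $P(x,q)$ together with Mellin inversion. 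In all three sums the dominant contribution comes from $|k|$ small, producing the same multiple of the singular-series integral, and these contributions cancel exactly in $\Sigma_1 - 2\Sigma_2 + \Sigma_3$.

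The main obstacle will be to control the off-diagonal error coming from the inner sum
\begin{equation*}
\sum_{m\in(n_1,n_1+H]} d_3(m) d_3(m+k)
\end{equation*}
uniformly in $k$ and on average over $n_1$, with a saving of a power of $N$. For individual $k$ this sum has no known asymptotic, but averaging over $n_1$ (weighted by $d_3(n_1)d_3(n_1+k)$) together with averaging over $k$ yields a bilinear form amenable to the techniques used for Theorem~\ref{mainres1}: Voronoi summation for $d_3$ on the inner length-$H$ sum, followed by a large sieve bound for the resulting additive characters indexed by the dual modulus. The hypothesis $H\ge N^{1/3+\varepsilon}$ arises precisely at the point where the large sieve begins to deliver square-root cancellation against the $d_3(n_1)d_3(n_1+k)$ weights — balancing the length $H$ against the Voronoi-dual modulus of size $\asymp N/H$ requires $H \cdot (N/H)^2 \ll HN^{2-\delta}$, which forces $H \gg N^{1/3+\varepsilon}$. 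Handling the degenerate behaviour near $k=0$, and matching the main terms from $\Sigma_2$ and $\Sigma_3$ with sufficient precision, are the technical subtleties I expect to consume most of the work.
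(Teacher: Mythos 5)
Your proposed dispersion approach contains a genuine gap. Opening the square gives $\Sigma_1=\sum_{h\le H}D(N,h)^2$, which after swapping sums becomes the quadruple correlation
\begin{equation*}
\sum_{N<n_1,n_2\le 2N} d_3(n_1)d_3(n_2)\sum_{h\le H}d_3(n_1+h)\,d_3(n_2+h).
\end{equation*}
Writing $k=n_2-n_1$, the inner sum is a length-$H$ shifted convolution of $d_3$ with shift $|k|$ as large as $N$, and it is further weighted by $d_3(n_1)d_3(n_1+k)$ and summed over $n_1$ and $k$. This is a four-fold correlation sum of $d_3$; it is \emph{not} a bilinear form that Voronoi plus the large sieve can evaluate asymptotically, because the weight $d_3(n_1)d_3(n_1+k)$ is itself an intractable shifted convolution. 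Since each of $\Sigma_1,\Sigma_2,\Sigma_3$ is of size $\asymp HN^2(\log N)^c$, cancellation in $\Sigma_1-2\Sigma_2+\Sigma_3$ to the required accuracy $O(HN^{2-\delta})$ would demand an asymptotic for $\Sigma_1$ with a power saving --- this is at least as hard as the third-moment problem you are trying to attack, and no amount of averaging in $n_1$ and $k$ restores tractability because the critical range $|k|\gg H$ contributes at full strength (the claim that ``the dominant contribution comes from $|k|$ small'' is not correct). Your heuristic for the threshold is also off: $H\cdot(N/H)^2\ll HN^{2-\delta}$ only requires $H\gg N^{\delta/2}$, not $H\gg N^{1/3}$, which signals that the mechanism you describe is not the one controlling the range.

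The paper avoids this obstruction entirely by never forming $\sum_h D(N,h)^2$. Following Mikawa, it writes $D(N,h)=\int_0^1|S(\alpha)|^2 e(-\alpha h)\,\dif\alpha+O(hN^\ve)$, shows pointwise in $h$ that the major-arc piece equals $\int_N^{2N}\mathfrak{S}(x,h)\,\dif x$ up to a power saving (Proposition~\ref{majorarcstheo}), and then bounds $\sum_{h\le H}\bigl|\int_{\mathfrak{m}}|S(\alpha)|^2e(-\alpha h)\,\dif\alpha\bigr|^2$. Upon expanding, the $h$-average produces the kernel $\min(H,\|\alpha_1-\alpha_2\|^{-1})$, which localises $\alpha_1\approx\alpha_2$; the resulting diagonal is handled by the Sobolev--Gallagher inequality (Lemma~\ref{mikawalemma1}) together with a mean-square short-interval estimate for $\sum_{t<n\le t+\Delta}d_3(n)e(\alpha n)$ (Lemma~\ref{mikawalemma5}), proved via the decomposition $d_3=\mathbf{1}\ast\mathbf{1}\ast\mathbf{1}$. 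The condition $H\ge N^{1/3+\ve}$ enters through the requirement $\Delta\gg N^{1/3}$ in Lemma~\ref{mikawalemma5} and the range $\Delta\ge N^{1/6+\eta}$ in Lemma~\ref{mikawalemma2} (Ivi\'c's sixth-moment input). The essential point is that the $L^2$-in-$h$ average is exploited on the Fourier side, where it yields an off-diagonal gain, rather than on the arithmetic side, where it would demand quadruple correlations of $d_3$ that are out of reach. You would need to rebuild your argument around this circle-method structure.
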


It follows from Theorem \ref{mainres2} 
that the expected asymptotic formula 
$$
D(N,h)\sim \int\limits_{N}^{2N} \mathfrak{S}(x,h) \dif x
$$
holds for almost all $h\le H$ if $N^{1/3+\varepsilon}\le H\le N^{1-\varepsilon}$.
Our proof of Theorem \ref{mainres2} is based on Mikawa's investigation
\cite{mikawa} of twin primes. Here the Hardy--Littlewood circle method
is adapted to study the second moment of the 
analogous shifted convolution sum in which $d_3(n)$
is replaced by the von Mangoldt function $\Lambda(n)$.  Our proof of
Theorem \ref{mainres1} is simpler, being based on Perron's
formula and a bound for the sixth moment of the Riemann zeta
function. 

\begin{notat}
Our work will involve small positive parameters $\ve$ and 
$\delta$, $\delta_1, \delta_2, \ldots$. The
value of $\varepsilon$ will be allowed to vary from line to line, and 
$\delta$, $\delta_1, \delta_2, \ldots$ may depend on
$\ve$.  All of the implied constants in our work are permitted to
depend at most on these parameters.
\end{notat}

\begin{ack}
  While working on this paper the first two authors were supported by
  EPSRC grant number \texttt{EP/E053262/1}, the third author was
  sponsored by the {\em Heilbronn Institute for Mathematical Research},
  and the fourth author by an AcRF Tier I
  grant.  Moreover, the first and fourth authors wish to thank the
  {\it Mathematisches Forschungsinstitut Oberwolfach}, where some
  parts of this paper were prepared during a ``Research in Pairs''
  programme. 
Finally the authors are grateful to Aleksandar Ivi\'c and Jie Wu for
  a number of useful comments on an earlier version of the manuscript.
\end{ack}

\section{Estimation of $G(N,H)$}\label{s:G}

The following two sections deal with the proof of Theorem
\ref{mainres1}. To this end, we evaluate separately the averages
$G(N,H)$, defined in \eqref{Gaverage}, and  
\begin{equation} \label{Fdefin}
F(N,H):=\sum\limits_{h\le H} \int\limits_{N}^{2N} \mathfrak{S}(x,h) \dif x.
\end{equation}
We begin with the more complicated evaluation of $G(N,H)$. 
Changing the order of summation, we get
\begin{equation} \label{change} 
G(N,H) = \sum_{N<n \le 2N } d_3(n) \sum_{h \leq H} d_3(n+h). 
\end{equation}
Using Perron's formula, the inner sum in \eqref{change} can be expressed in the form
\begin{equation} \label{inner}
 \sum_{h \leq H} d_3(n+h) = \frac{1}{2\pi i} \int\limits_{c-iT}^{c+iT} \zeta^3(s) \left( (n+H)^s -n^s \right) \frac{\dif s}{s} + O \left( \frac{N^{1+\varepsilon}}{T} \right), 
 \end{equation}
where $c = 1 + (\log N)^{-1}$ and $2\le T \le N$.
Shifting the line of integration and using the residue theorem, we see
that  the integral is 
\begin{equation} \label{moveint}
\begin{split}
\Res_{s=1} \zeta^3(s)\frac{(n+H)^s-n^s}{s} 
+ \frac{1}{2 \pi i} \left( \ \int\limits_{\mathcal{P}_1} +  \int\limits_{\mathcal{P}_2} + \int\limits_{\sigma-iT}^{\sigma + iT} \ \right) \zeta^3(s) \left( (n+H)^s -n^s \right) \frac{\dif s}{s} ,
\end{split}
\end{equation}
where $1/2 < \sigma <1$ is a parameter to be
fixed later, $\mathcal{P}_1$ is the line segment connecting $c-iT$ and $\sigma
- iT$, and $\mathcal{P}_2$ is the line segment connecting $\sigma + iT$ and
$c+iT$ . 

For $1/2\leq |\alpha|\leq 1$ and 
$|t| \geq 1$,  
Weyl's subconvexity bound is 
$
\zeta( \alpha +it ) \ll |t|^{(1-\alpha)/3+\varepsilon}.
$
Moreover, $\zeta(\alpha\pm iT)\ll \log T$ uniformly in $1\le \alpha
\le c$. Hence, for $i=1,2$, 
the integrals over $\mathcal{P}_i$ in \eqref{moveint} are bounded by 
\begin{equation} \label{horzint}
 \int\limits_{\mathcal{P}_i} \zeta^3(s) \left( (n+H)^s -n^s \right)
 \frac{\dif s}{s} \ll 
\frac{N^{\varepsilon}}{T} \int\limits_{\sigma}^1 T^{1-\alpha} N^{\alpha} \dif \alpha \ll \frac{N^{1+\varepsilon}}{T}, 
 \end{equation}
where we take into account that $2 \leq T \le N$ and $N<n \le 2N$. 

Combining this with \eqref{change}, \eqref{inner} and 
\eqref{moveint}, we therefore obtain 
\begin{equation} \label{Gest1}
G(N,H) =    M(N,H) + E(N,H) +
O\left(\frac{N^{2+\varepsilon}}{T}\right), 
\end{equation}
where
$$
M(N,H) := \sum_{N<n\le 2N} d_3(n) \Res_{s=1} \zeta^3(s)
\frac{(n+H)^s-n^s}{s} 
$$
and
\begin{equation} \label{error}
E(N,H) := \frac{1}{2\pi i} \int\limits_{\sigma-iT}^{\sigma + iT} \zeta^3(s) \sum_{N<n\le 2N} d_3(n) ((n+H)^{s}-n^s) \frac{\dif s}{s}.
\end{equation}

We proceed by writing 
\[
M(N,H) = \sum_{N < n  \le 2 N} d_3(n) g(n),
\]
with 
\[
g(x) := \Res_{s=1} \zeta^3(s) \frac{(x+H)^s-x^s}{s}.
\]
We note that $g(x)\ll H x^\varepsilon$ and 
$g'(x)\ll H x^{\varepsilon-1}$.
Thus partial summation yields
\[
M(N,H) = g(2N)\sum_{N<n\le 2N} d_3(n) - \int\limits_N^{2N} g'(t) \sum_{N < n \le t}
d_3(n) \dif t.
\]
The classical work of Voronoi \cite[Theorem 12.2]{titch} yields
\[
\sum_{n \le t} d_3(n) = \Res_{s=1} \zeta^3(s) \frac{t^s}{s}  +O(t^{1/2+\varepsilon}).
\]
From these results we deduce that 
\begin{align*}
  M(N,H) = ~& 
g(2N)
\cdot \left(\Res_{s=1} \zeta^3(s) \frac{(2N)^s-N^s}{s} \right)  
   - \int\limits_N^{2N} g'(t)
 \cdot \left(\Res_{s=1} \zeta^3(s) \frac{t^s-N^s}{s} \right)
  \dif t \\ 
&+ O ( HN^{1/2+\varepsilon}).
  \end{align*}
Integration by parts now reveals that
\begin{equation} \label{nr1}
  M(N,H)= \int\limits_N^{2N} 
g(t)
\cdot \left(\frac{\dif}{\dif t} \Res_{s=1} \zeta^3(s) \frac{t^s-N^s}{s} \right)
  \dif t +  O( HN^{1/2+\varepsilon}). 
\end{equation}
Employing the Taylor series expansion 
$$
\frac{(t+H)^s-t^s}{s}=Ht^{s-1}+\frac{H^2}{2}(s-1)t^{s-2}+\cdots$$
and the Laurent series expansion
for $\zeta^3(s)$ about $s=1$, we obtain
$$
\Res_{s=1} \zeta^3(s) \frac{(t+H)^s-t^s}{s} = H \Res_{s=1} \zeta^3(s)t^{s-1}+O\left(\frac{H^2}{t}\right),
$$
where we keep in mind that $H\le N$. Moreover,
$$
\frac{\dif}{\dif t} \Res_{s=1} \zeta^3(s) \frac{t^s-N^s}{s} =\Res_{s=1} \zeta^3(s)t^{s-1} \ll t^\varepsilon.
$$
Putting these facts 
together in \eqref{nr1},  we obtain
\begin{equation} \label{MNH}
M(N,H)=H\int\limits_{N}^{2N} \left(\Res_{s=1} \zeta^3(s)t^{s-1} \right)^2 \dif t+O(H^2N^{\varepsilon}+HN^{1/2+\varepsilon}).
\end{equation}

Our next task is to estimate $E(N,H)$ in \eqref{error}. 
Applying partial summation to the sum over $n$, we
see that
\begin{align*}
\sum_{N<n\le 2N} d_3(n) ((n+H)^{s}-n^s) =~& \sum\limits_{N<n\le 2N}
\left(\left(1+\frac{H}{n}\right)^s-1\right) d_3(n)n^s  \\ 
=~&\left(\left(1+\frac{H}{2N}\right)^s-1\right)\sum\limits_{N<n\le 2N}
d_3(n)n^s \\ 
 &+ sH\int\limits_{N}^{2N} \left(1+\frac{H}{x}\right)^{s-1}
 \left(\sum\limits_{N<n\le x} d_3(n)n^s\right) \frac{\dif
   x}{x^2}.
 \end{align*}
It follows that
\begin{equation} \label{errorsplit}
E(N,H)=E_1(N,H)+E_2(N,H),
\end{equation}
where
\begin{align*}
E_1(N,H)&:= \frac{1}{2\pi i} \int\limits_{\sigma-iT}^{\sigma + iT} \zeta^3(s) \left(\left(1+\frac{H}{2N}\right)^s-1\right) \left(\sum\limits_{N<n\le 2N} d_3(n)n^s\right) \frac{\dif s}{s}\\
&= \frac{1}{4\pi iN} \int\limits_{0}^{H}
\int\limits_{\sigma-iT}^{\sigma + iT} \zeta^3(s)
\left(1+\frac{\theta}{2N}\right)^{s-1} \left(\sum\limits_{N<n\le 2N}
  d_3(n)n^s\right) \dif s \dif \theta,
\end{align*}
and 
$$
E_2(N,H):= \frac{H}{2\pi i} \int\limits_{N}^{2N} \int\limits_{\sigma-iT}^{\sigma + iT} \zeta^3(s) \left(1+\frac{H}{x}\right)^{s-1} \left(\sum\limits_{N<n\le x} d_3(n)n^s\right) \frac{\dif s \dif x}{x^2}.
$$
For $i=1,2$ we may deduce that
\begin{equation} \label{firstbound}
E_i(N,H)\ll \frac{H}{N} \sup\limits_{N<x\le 2N}\  \int\limits_{-T}^{T} |\zeta(\sigma+it)|^3 \cdot \left|\sum\limits_{N<n\le x} d_3(n)n^{\sigma+it}\right| \dif t.
\end{equation}

Next, we transform the inner sum over $n$ in \eqref{firstbound} with a
further application of Perron's formula, obtaining
\begin{equation} \label{inner2}
 \sum_{N<n \leq x} d_3(n)n^s = \frac{1}{2\pi i} \int\limits_{c_1-2iT}^{c_1+2iT} \zeta^3(s_1-s) \left(x^{s_1}-N^{s_1}\right) \frac{\dif s_1}{s_1} + O \left( \frac{N^{1+\sigma+\varepsilon}}{T} \right), 
 \end{equation}
where $s=\sigma+it$ and $c_1= 1 + \sigma+ (\log N)^{-1}$.  We will
shift 
the line of integration and use the residue theorem, noting 
that we
cross the pole of the zeta function at 1 since $|t|\le T$. In this way
we see that the integral is
\begin{equation}
  \label{moveint2}
\Res_{s_1=1+s} 
\zeta^3(s_1-s)\cdot  \frac{x^{s_1}-N^{s_1}}{s_1}
+ \frac{1}{2 \pi i} \left( \ \int\limits_{\mathcal{P}_3} +
  \int\limits_{\mathcal{P}_4} + \int\limits_{2\sigma-2iT}^{2\sigma+2
    iT} \ \right) \zeta^3(s_1-s) \left(x^{s_1} -N^{s_1} \right)
\frac{\dif s_1}{s_1}, 
\end{equation}
where $\mathcal{P}_3$ is the line segment connecting $c_1 - 2iT$ to 
$2\sigma-2iT$, and $\mathcal{P}_4$ is the line segment connecting
$2\sigma+ 2iT$ to  $c_1+2iT$.  

In the same way as \eqref{horzint}, we see that 
$$
 \int\limits_{\mathcal{P}_i} \zeta^3(s_1-s) \left(x^{s_1} -N^{s_1}
 \right) \frac{\dif s_1}{s_1} \ll
 \frac{N^{1+\sigma+\varepsilon}}{T},$$ 
for $i=3,4$, where we take into account that $|t|\le T$.

From \eqref{firstbound} and  \eqref{inner2}, we deduce that
\begin{equation} \label{furthersplitting}
E_i(N,H)\ll A(N,H)+B(N,H), 
\end{equation}
for $i=1,2$,
where
$$
A(N,H):=HN^{2\sigma-1} \int\limits_{-T}^{T} \int\limits_{-2T}^{2T}
|\zeta(\sigma+it)|^3 |\zeta(\sigma+i(t_1-t))|^3 \frac{\dif
  t_1}{1+|t_1|} \dif t  
$$
and 
$$
B(N,H):=HN^{\sigma+\varepsilon} \int\limits_{-T}^{T}
|\zeta(\sigma+it)|^3 \frac{\dif t}{1+|t|}. 
$$
Here $A(N,H)$ bounds the contribution of the third integral on the
right-hand side of \eqref{moveint2}, and $B(N,H)$ bounds the
contributions  from the remaining terms.

Since $\sigma>1/2$, we have
$$
B(N,H)\ll HN^{\sigma+\varepsilon}
$$
by the familiar bound for the third moment of the Riemann zeta function. 
Next, using Cauchy--Schwarz, we obtain
$$
A(N,H)\ll HN^{2\sigma-1} \int\limits_{-2T}^{2T}  \left( \ \int\limits_{-T}^{T} |\zeta(\sigma+it)|^6 \dif t\right)^{1/2}  \left( \ \int\limits_{-T}^{T} |\zeta(\sigma+i(t_1-t))|^6 \dif t\right)^{1/2}  
\frac{\dif t_1}{1+|t_1|}.
$$
Now we choose $\sigma=7/12$. By \cite[Eq.\ (8.80)]{ivic}, we have the
expected bound for the sixth zeta moment on the line $\Re
s=7/12$. Hence
$$
A(N,H)\ll HN^{2\sigma-1}T^{1+\varepsilon}\ll HN^{1/6+\varepsilon}T.
$$
It therefore follows that 
$$
A(N,H)+B(N,H) \ll 
HN^{1/6+\varepsilon}T+ 
HN^{7/12+\varepsilon}.
$$
We will balance this bound with the estimate in 
\eqref{Gest1} by choosing 
$T = H^{-1/2} N^{11/12}$.
Combining this with \eqref{Gest1}, \eqref{MNH}, 
\eqref{errorsplit} and  \eqref{furthersplitting} 
we now get the final asymptotic formula 
\begin{equation} \label{Geval}
G(N,H)=H\int\limits_{N}^{2N} \left(\Res_{s=1} \zeta^3(s)t^{s-1}
\right)^2 \dif t+O(H^2 N^{\varepsilon}+
  H^{1/2}N^{13/12+\varepsilon} ).
\end{equation}
Here we have observed that 
$HN^{7/12}\leq H^{1/2}N^{13/12}$ for $H\leq N$.

\section{Estimation of $F(N,H)$}\label{s:F}
It remains to evaluate $F(N,H)$, defined in \eqref{Fdefin}, and to estimate the difference
\begin{equation} \label{difference}
\sum\limits_{h\le H} \Delta(N,h)=G(N,H)-F(N,H).
\end{equation}
We observe that
\begin{equation}\label{mathfraksplit}
\begin{split}
\sum\limits_{h\le H} \mathfrak{S}(x,h) &=\sum\limits_{h\le H}
\sum\limits_{q=1}^{\infty} \frac{c_q(h)}{q^2}\cdot P(x,q)^2\\ 
&= \sum\limits_{q\le H} \left(\sum\limits_{h\le H} c_q(h)\right) \cdot
\frac{P(x,q)^2}{q^2}+\sum\limits_{h\le H} \sum\limits_{q> H}
\frac{c_q(h)}{q^2} \cdot P(x,q)^2.
\end{split}\end{equation}
In section \ref{s:SS}, we shall show that $P(x,q)=P^*(x,q)$, where
$P^*(x,q)$ is defined as in \eqref{P3}. Applying 
\eqref{eq:P3-upper} we therefore obtain
$
P(x,q)\ll (qx)^{\varepsilon},
$
since $H\leq N$ and $x\leq 2N$.
Using this and the fact that $|c_q(h)|\le (q,h)$, we deduce that
\begin{align*}
\sum\limits_{h\le H} \sum\limits_{q> H} \frac{c_q(h)}{q^2} \cdot
P(x,q)^2 &\ll x^{\varepsilon} \sum\limits_{h\le H} \sum\limits_{q> H}
\frac{(q,h)}{q^{2-\varepsilon}}  \\ 
&\ll  x^{\varepsilon} \sum\limits_{h\le H} \sum\limits_{d\mid h}
\sum\limits_{\substack{q>H\\ d\mid q}} \frac{d}{q^{2-\varepsilon}}\\
&\ll  x^{\varepsilon} \sum\limits_{h\le H} \sum\limits_{d\mid h}
\left(\frac{H}{d}\right)^{-1+\varepsilon} \cdot
\frac{d}{d^{2-\varepsilon}}\\ 
&\ll (xH)^{\varepsilon}.  
\end{align*}
Next, we evaluate the first sum on the right-hand side of
\eqref{mathfraksplit}. An old result of Carmichael \cite{car}
asserts that 
$$
\sum_{h\leq q}c_q(h)=0,
$$
if $q>1$. Hence we see that
$$
\sum\limits_{h\le H} c_q(h) 
= \begin{cases} 
H+O(1), & \mbox{ if } q=1,\\
O(q^{1+\varepsilon}), & \mbox{ if } q>1. 
\end{cases}
$$
Putting all of this together, and using the definition of
$P(x,1)$ in \eqref{Pdefini}, we get 
\begin{align*} 
\sum\limits_{h\le H} \mathfrak{S}(x,h) &= H\cdot
P(x,1)^2+O\left((xH)^{\varepsilon}\right)\\ 
&= H \left(\Res_{s=1}
\zeta^3(s)x^{s-1}\right)^2+ O\left((xH)^{\varepsilon}\right).
\end{align*}
This implies that
$$
F(N,H)=\sum\limits_{h\le H} \int\limits_{N}^{2N} \mathfrak{S}(x,h)\dif
x =H \int\limits_{N}^{2N} \left(\Res_{s=1} \zeta^3(s)x^{s-1}\right)^2
\dif x+O(N^{1+\varepsilon}). 
$$
Combining this with \eqref{Geval} and \eqref{difference}, 
we therefore conclude the proof of Theorem \ref{mainres1}.

\section{Activation of the circle method}

Now we turn to the proof of Theorem \ref{mainres2}. We shall mimic
Mikawa's \cite{mikawa} treatment of the same problem for
$\Lambda(n)$ in place of $d_3(n)$. 
However, several of Mikawa's arguments need to be adjusted
to the present situation, and additional complications will occur. In
this section, we describe the general setup of the circle
method. 

We begin by observing that
\begin{equation} \label{Dint}
D(N,h)=\int\limits_{0}^{1} |S(\alpha)|^2 e(-\alpha h) \dif \alpha +
O(hN^{\varepsilon}),
\end{equation}
where 
$$
S(\alpha):=\sum\limits_{N<n\le 2N} d_3(n)e(n\alpha).
$$
Let
$
Q_1:=N^{\delta}$ and $Q:=N^{1/4},
$
for a small parameter $0<\delta<1/4$. 
We divide the integration 
into major and minor arcs as follows. The major arcs are defined as
$$
\mathfrak{M}:=\bigcup_{q\le Q_1} \bigcup_{\substack{1\le a\le q\\ (a,q)=1}} I_{q,a}, \quad
I_{q,a}:=\left[\frac{a}{q}-\frac{1}{qQ},\frac{a}{q}+\frac{1}{qQ}\right],
$$
and the minor arcs as
$$
\mathfrak{m}:=\left[Q^{-1},1+Q^{-1}\right]\setminus \mathfrak{M}.
$$
In the remainder of this paper we establish the following two results.
Taken  together with \eqref{Dint}, they imply Theorem \ref{mainres2}.

\begin{prop} \label{majorarcstheo} Let $0<\eta<1$ and let $\delta>0$
  be sufficiently small.
Then there exists $\delta_1>0$ depending on $\eta$ and $\delta$ such
that uniformly for $h\le N^{1-\eta}$, we have 
$$
\int\limits_{\mathfrak{M}} |S(\alpha)|^2 e(-\alpha h) \dif \alpha=\int\limits_{N}^{2N} \mathfrak{S}(x,h) \dif x + O(N^{1-\delta_1}).
$$
\end{prop}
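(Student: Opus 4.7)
The plan is to approximate $S(\alpha)$ on each major arc $I_{q,a}$ by a principal term $S^*(\alpha)$ built from the local function $P(x,q)$, and then to verify that substituting $S^*$ into the major-arc integral recovers $\int_N^{2N}\mathfrak{S}(x,h)\,\dif x$ up to an acceptable error. To construct $S^*$, I would write $\alpha = a/q+\beta$ with $(a,q)=1$ and $|\beta|\le 1/(qQ)$ and decompose
$$
S(\alpha) = \sum_{b\bmod q} e\!\left(\frac{ab}{q}\right)\sum_{\substack{N<n\le 2N\\ n\equiv b\pmod{q}}} d_3(n)\,e(n\beta),
$$
then reduce by partial summation to uniform asymptotics for $D_3(x;q,b):=\sum_{n\le x,\,n\equiv b\,(\mathrm{mod}\,q)} d_3(n)$ in the range $q\le Q_1=N^{\delta}$. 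These asymptotics follow from orthogonality of Dirichlet characters modulo $q$ combined with a contour shift of the Mellin representation of $L(s,\chi)^3$ past its pole at $s=1$, in the spirit of the Perron analysis of Section~\ref{s:G}. The elaborate local factor $H(s,q)$ in \eqref{Pdefini} is designed precisely to absorb the Euler corrections arising on each residue class, so that summing the residue contributions produces a clean principal approximation $S^*(\alpha)$ whose dependence on $\beta$ is the Fourier transform $\int_N^{2N}P(x,q)\,e(x\beta)\,\dif x$.

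Substituting $S^*$ and expanding $|S^*|^2$, the sum over $a$ produces Ramanujan sums through $\sum_{(a,q)=1} e(-ah/q) = c_q(h)$, so the principal contribution to the major-arc integral becomes
$$
\sum_{q\le Q_1} \frac{c_q(h)}{q^2}\int_{|\beta|\le 1/(qQ)} \left|\int_N^{2N} P(x,q)\,e(x\beta)\,\dif x\right|^2 e(-\beta h)\,\dif\beta.
$$
The inner $\beta$-integral can be extended to all of $\mathbb{R}$ with negligible loss, thanks to $P(x,q)\ll(qx)^{\varepsilon}$ from \eqref{eq:P3-upper}; Plancherel then identifies it with $\int_N^{2N} P(x,q)P(x-h,q)\,\dif x$. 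Because $P(\cdot,q)$ is a polynomial in $\log x$ of size $(qN)^{\varepsilon}$ and $h\le N^{1-\eta}$, the latter equals $\int_N^{2N} P(x,q)^2\,\dif x$ up to error $O(hN^{\varepsilon})$. Completing the sum over $q$ by the tail bound established in Section~\ref{s:F} then yields $\int_N^{2N}\mathfrak{S}(x,h)\,\dif x$ with a total main-term error $O(N^{1-\delta_1})$ provided $\varepsilon<\eta$.

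The main obstacle is control of the remainder $R(\alpha):=S(\alpha)-S^*(\alpha)$. Splitting
$$
\int_{\mathfrak{M}} |S|^2 e(-\alpha h)\,\dif\alpha = \int_{\mathfrak{M}}|S^*|^2 e(-\alpha h)\,\dif\alpha + 2\Re\!\int_{\mathfrak{M}} S^*\bar R\, e(-\alpha h)\,\dif\alpha + \int_{\mathfrak{M}}|R|^2 e(-\alpha h)\,\dif\alpha
$$
and applying Cauchy--Schwarz to the middle term against the benign bound $\int_{\mathfrak{M}}|S^*|^2\ll N^{1+\varepsilon}$, it suffices to prove a mean-square estimate $\int_{\mathfrak{M}}|R|^2\,\dif\alpha \ll N^{1-2\delta_2}$ for some $\delta_2>0$. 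Since $\mathfrak{M}$ has measure $\ll Q_1^2/Q = N^{2\delta-1/4}$, this will follow from a pointwise bound $R(\alpha)\ll N^{1-\delta_3}$ uniformly on $\mathfrak{M}$. Such a bound rests on a careful choice of Perron truncation height $T$, balanced against the (sub)convexity estimate $L(\sigma+it,\chi)^3\ll(q|t|)^{3(1-\sigma)/2+\varepsilon}$ coming from the contour shift for non-principal characters $\chi\bmod q$, together with a standard moment bound for $L(s,\chi)$ on the critical strip. The smallness of $\delta$ is precisely what allows this balance to yield a genuine power saving $\delta_3>0$, while the hypothesis $h\le N^{1-\eta}$ is what makes the Plancherel step admissible; combining these ingredients produces the claimed $\delta_1>0$.
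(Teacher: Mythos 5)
Your main-term computation is broadly sound and close to what the paper actually does: approximate $S(\alpha)$ by a principal term built from a local density, extract Ramanujan sums via $\sum_{(a,q)=1}e(-ah/q)=c_q(h)$, complete the $\beta$-integral, and reconcile the resulting local factor with the Conrey--Gonek $P(x,q)$ (the paper carries out this reconciliation explicitly in Section~\ref{s:SS} by matching Euler factors at prime powers; you assert it without proof, which is a gap of exposition but not a real obstruction). The Plancherel bookkeeping for the main term and the tail estimate in $q$ are also fine.

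The fatal flaw is your treatment of $R(\alpha)=S(\alpha)-S^*(\alpha)$, where you propose a \emph{pointwise} bound $R(\alpha)\ll N^{1-\delta_3}$ uniformly on $\mathfrak{M}$, obtained from a Perron contour shift plus (sub)convexity, to be played off against the small measure of $\mathfrak{M}$. This cannot work, even conditionally. On a major arc $\alpha=a/q+\beta$ with $q\le Q_1=N^\delta$ and $|\beta|\le 1/(qQ)$, $Q=N^{1/4}$, partial summation from $\sum_{n\le x,\,n\equiv b\,(q)}d_3(n)$ to $\sum_n d_3(n)e(n\beta)$ introduces a factor of size $1+|\beta|N$, which is $\asymp N^{3/4}$ when $q$ is small and $|\beta|$ is near $1/Q$. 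To get $R(\alpha)\ll N^{1-\delta_3}$ you would therefore need the error in $\sum_{n\le x,\,n\equiv b\,(q)}d_3(n)$ to be $\ll N^{1/4-\delta_3}$ uniformly in $q\le N^\delta$; but Perron with convexity --- or even Lindel\"of, which still leaves the $N/T$ tail --- gives at best $O(N^{1/2+\varepsilon})$, the Voronoi bound cited in Section~\ref{s:G} is also $O(x^{1/2+\varepsilon})$, and even the conjecturally optimal exponent $1/3$ for the error in $\sum_{n\le x}d_3(n)$ falls short of $1/4$. The smallness of $\delta$ does not rescue this: the obstruction is $|\beta|$ near the edge $1/(qQ)$, not $q$ near $Q_1$. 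The correct remedy, which the paper follows (after Mikawa \cite{mikawa}), is to bound $\int_{\mathfrak{M}}|R|^2\,\dif\alpha$ in \emph{mean square} with no pointwise saving at all: decompose $S(\alpha)=a+b+c$ into the approximant plus non-principal and principal-character remainders, apply the Sobolev--Gallagher inequality (Lemma~\ref{mikawalemma1}) to convert the $\beta$-integral over each arc into a mean square of $\sum_{x<n\le x+\Delta}\chi(n)d_3(nk)$ over short intervals of length $\Delta\asymp qQ$, and then invoke Lemma~\ref{mikawalemma2} --- a character-twisted version of Ivi\'c's short-interval mean-square theorem for $d_3$, resting on the sixth moment of $\zeta$ and of $L(s,\chi)$ on $\Re s=7/12$. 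This yields $B^2,C^2\ll N^{1-\delta'}$ and is precisely the step your proposal lacks; without a mean-square substitute, your error estimate collapses.
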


\begin{prop} \label{minorarcstheo} Let $0<\eta<1/3$ 
and let $\delta>0$   be sufficiently small.
Then there exists $\delta_2>0$ depending on $\eta$ and $\delta$ such
that for $N^{1/3+\eta}\le H\le N^{1-\eta}$, we have 
$$
\sum\limits_{h\le H} \left|\int\limits_{\mathfrak{m}} |S(\alpha)|^2
  e(-\alpha h) \dif \alpha \right|^2\ll N^{2-\delta_2}. 
$$
\end{prop}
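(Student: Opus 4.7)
Following Mikawa's circle-method treatment of the corresponding second moment for twin primes, I would expand the square and interchange the order of summation and integration to arrive at the bilinear identity
\[
\sum_{h\le H}\left|\int_{\mathfrak{m}}|S(\alpha)|^2 e(-\alpha h)\,\d\alpha\right|^2
= \int_{\mathfrak{m}}\int_{\mathfrak{m}} |S(\alpha_1)|^2|S(\alpha_2)|^2\, K_H(\alpha_1-\alpha_2)\,\d\alpha_1\,\d\alpha_2,
\]
with Fej\'er-type kernel $K_H(\beta)=\sum_{h\le H}e(-h\beta)$ satisfying $|K_H(\beta)|\ll\min(H,\|\beta\|^{-1})$. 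Replacing $K_H$ by a smoothed majorant with Fourier support of scale $1/H$ concentrates the integration into the near-diagonal band $\|\alpha_1-\alpha_2\|\ll 1/H$, and reduces the proposition to a hybrid estimate of the form
\[
H\int_{\mathfrak{m}}|S(\alpha_1)|^2 \left(\int_{|\alpha_2-\alpha_1|\le 1/H,\,\alpha_2\in\mathfrak{m}} |S(\alpha_2)|^2\,\d\alpha_2\right)\d\alpha_1 \ll N^{2-\delta_2}.
\]

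Next, unfolding the representation $d_3(n)=\sum_{abc=n}1$ and partitioning dyadically, I would decompose $S(\alpha)$ as a sum of $O(\log^3 N)$ trilinear exponential sums
\[
S_{A,B,C}(\alpha) = \sum_{\substack{a\sim A,\,b\sim B,\,c\sim C\\ N<abc\le 2N}} e(abc\alpha),
\]
with $A\le B\le C$ and $ABC\asymp N$. On each such piece, for $\alpha$ in $\mathfrak{m}$, Weyl-type differencing when one of $A,B,C$ is small---combined with the Farey conditions defining $\mathfrak{m}$ (where $Q_1<q\le Q$ or $|\alpha-a/q|>1/(qQ)$, with $Q_1=N^\delta$ and $Q=N^{1/4}$)---yields pointwise bounds $|S_{A,B,C}(\alpha)|\ll N^{1-\delta_0(A,B,C)}$. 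The inner short-interval integral is then attacked via Gallagher's lemma, reducing it to a second moment of a short truncation of $S$ weighted by diagonal correlations of $d_3$; these in turn are controlled via Shiu-type upper bounds on $\sum_{n\le N}d_3(n)d_3(n+k)$.

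The central obstacle, and the reason for the hypothesis $H\ge N^{1/3+\eta}$, is the balanced case $A\asymp B\asymp C\asymp N^{1/3}$, in which no single variable is short enough to support a direct differencing step. Here the Fourier scale $1/H<N^{-1/3-\eta}$ lies strictly below the typical variable size $N^{1/3}$, so that one of the three variables can be effectively frozen during a Cauchy--Schwarz and large-sieve argument applied to the remaining bilinear form in the other two; this produces an effective saving $\delta_2=\delta_2(\eta,\delta)>0$ in the exponent. A further complication absent from Mikawa's treatment of $\Lambda(n)$ is the presence of coincidence contributions in which two of the three divisor variables agree, and these must be peeled off from the main bilinear calculation and bounded directly using mean values of $d_3(n)^2$.
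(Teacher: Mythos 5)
Your overall scaffolding matches the paper's, which indeed follows Mikawa's circle-method treatment of the analogous problem for $\Lambda(n)$: expand the square, bound the Dirichlet kernel by $\min(H,\|\beta\|^{-1})$, split by distance to the diagonal, apply the Sobolev--Gallagher inequality (Lemma \ref{mikawalemma1}) to the near-diagonal contribution, and then control the resulting short-interval mean value via the decomposition $d_3=\mathbf{1}\ast\mathbf{1}\ast\mathbf{1}$ in dyadic boxes. Your intuition that the constraint $H\ge N^{1/3+\eta}$ comes from the need to handle dyadic pieces with all three variables comparable to $N^{1/3}$ is also in the right spirit: after Gallagher the interval length is $\asymp H$, and the paper's Lemma \ref{mikawalemma5} requires $\Delta>N^{1/3}$ precisely to beat the $\Delta N\cdot A$ term coming from the bilinear large-sieve bound (Lemma \ref{mikawalemma8}) when $A$ is as large as $N^{1/3}$.

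There are however two concrete issues. First, your diagonal split at scale $1/H$ is too narrow: on the band $\|\alpha_1-\alpha_2\|>1/H$ the kernel is only $\le H$, so after Parseval you get $\ll HN^{2+\varepsilon}$ with no power saving. The paper instead splits at $1/\Delta$ with $\Delta=HN^{-\delta_{11}}$ for a small $\delta_{11}>0$: the wide-band part then carries the factor $\le\Delta=HN^{-\delta_{11}}$ and loses a power, while the near-diagonal band (width $1/\Delta$, slightly wider than $1/H$) is still short enough for Gallagher. You need this extra $N^{\delta_{11}}$ of slack. Second, your description of the inner estimate is off-target. The paper does not use Shiu-type upper bounds for $\sum d_3(n)d_3(n+k)$, nor does it peel off any ``coincidence'' contributions where two of the three divisor variables agree; these steps do not occur and would be a detour. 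After the dyadic decomposition, $\alpha\in\mathfrak m$ is Diophantine-approximated via Dirichlet (giving $Q_1<q\le Q$), and the three ranges of the smallest factor $A$ are handled directly by analogues of Mikawa's Lemmas 6, 7 and 8 (the paper's Lemmas \ref{mikawalemma6}--\ref{mikawalemma8}): a type-II/large-sieve bound when $C$ is very large, a more delicate bilinear bound in the intermediate case, and a Cauchy--Schwarz/spacing bound with the shortest variable of length $\ge N^{\delta_4}$ in the remaining range, which includes the balanced case. You should replace the Shiu/coincidence machinery with these bilinear exponential-sum lemmas, and adjust the splitting scale as above.
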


Before we can state all the lemmas needed in our method, we need to
introduce a certain Dirichlet series and compute a related
residue. Let $k,q\in \NN$ 
and let $\chi$ be a character modulo $q$. A function that will occur
frequently in our analysis is the Dirichlet series
\begin{equation}
  \label{eq:series}
  F_k(\chi,s):=\sum\limits_{n=1}^{\infty} \frac{\chi(n)d_3(nk)}{n^s},
\end{equation}
initially defined for $\Re(s)>1$. In the following, we convert this series into an Euler product and show that it can be meromorphically continued to the half plane $\Re(s)>0$, with a possible pole at $s=1$, depending on whether the character $\chi$ is principal or not.  

To start with, let $\Re(s)>1$. By $\mathcal{A}_k$, we denote the set of integers whose prime divisors all divide $k$. Obviously, we can factor $F_k(\chi,s)$ in the form
\begin{equation} \label{factorization}
F_k(\chi,s)=A_k(\chi,s)B_k(\chi,s),
\end{equation}
where
$$
A_k(\chi,s)
:=\sum\limits_{n\in \mathcal{A}_k}
\frac{\chi(n)d_3(kn)}{n^s}, \quad
B_k(\chi,s)
:=\sum\limits_{(n,k)=1} \frac{\chi(n)d_3(n)}{n^s}.
$$
Now we may write $A_k$ and $B_k$ as Euler products in the form
\begin{align} \label{Afact}
A_k(\chi,s)
&=\prod\limits_{p\mid k}  \sum\limits_{j=0}^{\infty} \frac{\chi(p^j)d_3\left(p^{j+v_p(k)}\right)}{p^{js}},\\
\label{Bfact}
B_k(\chi,s)&=\prod\limits_{p\mid k} \left(1-\frac{\chi(p)}{p^s}\right)^3 L^3(\chi,s).
\end{align}
Obviously, $A_k(\chi,s)$ can be analytically continued to the half
plane $\Re(s)>0$, and $B_k(\chi,s)$ can be meromorphically continued
to the whole complex plane. Moreover $B_k(\chi,s)$ is holomorphic if
$\chi$ is non-principal and has a pole at $s=1$ if $\chi$ is
principal. In the latter case, when $\chi$ is the principal character
$\chi_0$ modulo $q$, we have 
\begin{equation} \label{Bprinc}
B_k(\chi_0,s)=\prod\limits_{p\mid kq} \left(1-\frac{1}{p^s}\right)^3 \zeta^3(s).
\end{equation}
Furthermore, we have the following bounds.

\begin{lemma}  Let $k,q\in \NN$.
Let $\chi$ be a non-principal character modulo $q$. Then for $\Re(s)>1/2$ we have
\begin{equation} \label{Fnonprincbound}
\left| F_k(\chi,s)\right| \ll k^{\varepsilon}\left| L(\chi,s)\right|^3.
\end{equation}
Let  $\chi_0$ be the principal character modulo $q$. Then
for $\Re(s)>1/2$ and $s\not=1$ we have
\begin{equation} \label{Fprincbound}
\left| F_k(\chi_0,s)\right| \ll (kq)^{\varepsilon}\left| \zeta(s)\right|^3.
\end{equation}
For $j\in \{0,1, 2\}$ we have 
\begin{equation} \label{resbound}
\frac{\dif^j}{\dif^j x}\mbox{\rm Res}_{s=1} F_k(\chi_0,s)\cdot \frac{x^s}{s} \ll \frac{(qkx)^{\varepsilon}x}{x^j}.
\end{equation}
\end{lemma}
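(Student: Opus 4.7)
The plan is to leverage the Euler product factorisation $F_k(\chi,s)=A_k(\chi,s)B_k(\chi,s)$ established in \eqref{factorization}--\eqref{Bprinc}, reducing everything to pointwise control of $A_k$ and of the finite Euler products over primes dividing $k$ or $kq$. For the bounds \eqref{Fnonprincbound} and \eqref{Fprincbound}, I would first observe that for $\Re(s)>1/2$ the $p$-th local factor of $A_k(\chi,s)$ is at most
\[
\sum_{j\ge 0}\frac{d_{3}(p^{j+v_{p}(k)})}{p^{j/2}}\ll (v_{p}(k)+1)^{2},
\]
by the closed form $d_{3}(p^{a})=\binom{a+2}{2}$ and geometric summation. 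Taking the product over $p\mid k$ yields $|A_{k}(\chi,s)|\ll d(k)^{2}\ll k^{\varepsilon}$. The residual finite products $\prod_{p\mid k}(1-\chi(p)p^{-s})^{3}$ (non-principal case) and $\prod_{p\mid kq}(1-p^{-s})^{3}$ (principal case) appearing in \eqref{Bfact} and \eqref{Bprinc} are bounded by $k^{\varepsilon}$ and $(kq)^{\varepsilon}$ respectively, using $|1-\chi(p)p^{-s}|\le 1+p^{-1/2}\le 2$. Multiplying through by $|L(\chi,s)|^{3}$ or $|\zeta(s)|^{3}$ then proves \eqref{Fnonprincbound} and \eqref{Fprincbound}.

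For \eqref{resbound}, I would write $F_k(\chi_0,s)=\phi_k(s)\,\zeta^{3}(s)$ with
\[
\phi_k(s):=A_k(\chi_0,s)\prod_{p\mid kq}\left(1-p^{-s}\right)^{3},
\]
which is holomorphic in a neighbourhood of $s=1$. Since $\zeta^{3}(s)$ has a triple pole there, the standard formula for a residue at a pole of order three gives
\[
\Res_{s=1}F_k(\chi_0,s)\frac{x^{s}}{s}=\frac{1}{2}\lim_{s\to 1}\frac{d^{2}}{ds^{2}}\left[\bigl((s-1)\zeta(s)\bigr)^{3}\phi_k(s)\frac{x^{s}}{s}\right],
\]
which expands via the Leibniz rule and the identity $(x^{s})^{(m)}\big|_{s=1}=x(\log x)^{m}$ into $x$ times a polynomial of degree at most $2$ in $\log x$, whose coefficients are linear combinations of $\phi_k^{(m)}(1)$ for $m=0,1,2$ weighted by the first few Laurent coefficients of $\zeta$ at $s=1$. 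Differentiating $j\le 2$ times in $x$ then produces $x^{1-j}$ times another polynomial of degree at most $2$ in $\log x$, which is absorbed into the $x^{\varepsilon}$ factor in \eqref{resbound}.

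The only technical point, and what I expect to be the main obstacle, is showing $\phi_k^{(m)}(1)\ll(qk)^{\varepsilon}$ for $m\in\{0,1,2\}$. For $m=0$ this follows from the work already done on $A_k$ combined with $\prod_{p\mid kq}(1-1/p)^{3}\le 1$. For $m\ge 1$ I would pass to the logarithmic derivative: since $\phi_k$ is a finite Euler product over $p\mid kq$, the quantity $(\log\phi_k)^{(m)}(1)$ is a sum over such primes of terms of size $O((\log p)^{m}/p)$, and the elementary estimate $\sum_{p\mid n}(\log p)^{m}\le(\log n)^{m}$ gives $(\log\phi_k)^{(m)}(1)\ll(\log kq)^{m}\ll(kq)^{\varepsilon}$. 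A routine application of Fa\`a di Bruno then transfers this bound to $\phi_k^{(m)}(1)$ itself, and assembling the pieces completes the proof of \eqref{resbound}.
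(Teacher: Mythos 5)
For the bounds \eqref{Fnonprincbound} and \eqref{Fprincbound} you proceed exactly as the paper does: factor $F_k=A_kB_k$, bound $A_k(\chi,s)\ll k^{\varepsilon}$ by a direct estimate on each local factor, and control the finite Euler products over $p\mid k$ or $p\mid kq$ pointwise for $\Re(s)>1/2$. No difference there.

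For \eqref{resbound}, however, you take a genuinely different and noticeably heavier route. You factor $F_k(\chi_0,s)=\phi_k(s)\zeta^3(s)$ with $\phi_k$ holomorphic near $s=1$, invoke the standard residue formula for a triple pole, and then reduce everything to the bound $\phi_k^{(m)}(1)\ll(qk)^{\varepsilon}$ for $m\le 2$, which you attack via logarithmic derivatives and Fa\`a di Bruno. This does work (your intermediate claim that the local contributions are $O((\log p)^m/p)$ is a little off for small primes and for the $A_k$ factor, where the true bound is $O((\log p)^m)$, but the conclusion $\sum_{p\mid kq}(\log p)^m\ll(\log kq)^m\ll(kq)^{\varepsilon}$ still holds). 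The paper sidesteps all of this derivative bookkeeping: it writes the residue as $\frac{1}{2\pi i}\int_{|s-1|=1/3}F_k(\chi_0,s)\frac{x^s}{s}\,\dif s$, expands $x^s=x\sum_{n=0}^{2}\frac{\log^n x}{n!}(s-1)^n+(s-1)^3R_x(s)$, notes the $R_x$ term contributes nothing because the pole has order three, and thereby expresses the residue as $\sum_{n=0}^{2}c_n\,x\log^n x$ with each $c_n$ a contour integral. Each $c_n$ is then estimated directly from the already-established pointwise bound \eqref{Fprincbound} on the circle $|s-1|=1/3$, where $|\zeta(s)|$ is bounded by an absolute constant; differentiating $x\log^n x$ in $x$ finishes the job. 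The paper's trick thus recycles \eqref{Fprincbound} and avoids ever differentiating $\phi_k$, whereas your argument is correct but requires a separate (and somewhat fiddly) analysis of $\phi_k^{(m)}(1)$.
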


\begin{proof}
We first deduce from \eqref{Afact} that
$$
 A_k(\chi,s) \ll
 \prod\limits_{p\mid k}  p^{\nu_p(k) \varepsilon}
 \sum\limits_{j=0}^{\infty} \frac{p^{j\varepsilon}}{p^{j/2}} =
 \prod\limits_{p\mid k} \frac{p^{\nu_p(k)
 \varepsilon}}{1-p^{-1/2+\varepsilon}} 
\ll k^{\varepsilon} , 
$$
provided $\varepsilon \leq 1/4$.   Moreover, if $\chi$ is a Dirichlet
character modulo $q$ and $\Re(s) > 1/2$, we have 
$$
 \prod_{p\mid k}  \left(1-\frac{\chi(p)}{p^s}\right)^3 \ll \prod_{p\mid k}  \left(1+\frac{1}{\sqrt{2}}\right)^3 \ll k^{\varepsilon}.
$$
 Similarly, if $\Re(s) > 1/2$, then 
$$
 \prod_{p\mid kq} \left(1-\frac{1}{p^s}\right)^3 \ll (kq)^{\varepsilon}.
$$
Combining these estimates with
\eqref{factorization}, \eqref{Bfact} and \eqref{Bprinc}, we
arrive at the first pair of estimates in the statement of the lemma.

 Let $x>0$. To prove \eqref{resbound}, we note that
\[ x^s = x \sum_{n=0}^2 \frac{\log^n x}{n!} (s-1)^n + (s-1)^3 R_x(s), \]
where $R_x(s)$ is an entire function in $s$.  We have
\begin{equation} \label{differentiate}
\begin{split}
 \Res_{s=1} F_k(\chi_0,s)\cdot \frac{x^s}{s}  & =  \frac{1}{2\pi i} \ \int\limits_{|s-1|=1/3} F_k(\chi_0,s) \frac{x^s}{s} \ \dif s \\
 & = \frac{1}{2\pi i} \sum_{n=0}^2 \ \int\limits_{|s-1|=1/3} \frac{F_k(\chi_0,s)}{s} \frac{(s-1)^n}{n!} \ \dif s \cdot x \log^n x .
 \end{split}
 \end{equation}
The integral involving $(s-1)^3R_x(s)$ vanishes since $F_k(\chi_0,s)$
has triple pole at $s=1$.  We now have
$$
 \frac{\dif^j}{\dif^j x}\mbox{\rm Res}_{s=1} F_k(\chi_0,s)\cdot
 \frac{x^s}{s}  = \frac{1}{2\pi i} \sum_{n=0}^2 \
 \int\limits_{|s-1|=1/3} \frac{F_k(\chi_0,s)}{s} \frac{(s-1)^n}{n!} \
 \dif s \cdot \frac{\dif^j}{\dif^j x} x \log^n x , 
$$
for $j \in \{0,1,2\}$.  It is clear that
$$
 \frac{\dif^j}{\dif^j x} x \log^n x \ll x^{1-j+\varepsilon} .
$$
Furthermore, using \eqref{Fprincbound}, we have for $|s-1|=1/3$ and $0 \leq n \leq 2$,
$$
 \frac{F_k(\chi_0,s)}{s} \frac{(s-1)^n}{n!} \ll (qk)^{\varepsilon} \left| \zeta(s) \right|^3 \ll (qk)^{\varepsilon}.
$$
Here we have noted that 
$\zeta(s)$ is bounded above by an absolute constant for $s$ with
$|s-1|=1/3$.  Inserting these bounds into \eqref{differentiate}, we
arrive at \eqref{resbound}. 
\end{proof}

\section{Technical results}

In this section we record some of the key technical facts that will be
called upon in our method. 

\begin{lemma} \label{mikawalemma1} Let $2<\Delta<N/2$. For arbitrary 
$a_n\in \CC$, we have
$$
\int\limits_{|\beta|\le 1/\Delta} \left| \sum\limits_{N<n\le 2N} a_n e(\beta n) \right|^2\dif \beta \ll \Delta^{-2} \int\limits_{N}^{2N} \left| \sum\limits_{t<n\le t+\Delta/2} a_n\right|^2 \dif t
+\Delta \left(\sup\limits_{N<n\le 2N} \left| a_n\right|\right)^2,
$$
where the implied constant is absolute.
\end{lemma}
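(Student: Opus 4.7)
The lemma is a Gallagher-type inequality, so my plan is to deduce it from Plancherel's theorem via an explicit Fourier identity that relates $S(\beta) := \sum_{N<n\le 2N} a_n e(\beta n)$ to the short sum $g(t) := \sum_{t<n\le t+\Delta/2} a_n$, where we extend the coefficients $a_n$ by zero outside of $(N,2N]$. The function $g$ is then compactly supported on $[N-\Delta/2,\,2N]$.

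The key identity I will use is the Fourier transform relation
$$
\widehat{g}(\beta) \;=\; \int_{-\infty}^{\infty} g(t) e(-\beta t)\,\dif t \;=\; S(-\beta)\cdot I(\beta),
\qquad
I(\beta) := \int_{0}^{\Delta/2} e(\beta u)\,\dif u,
$$
obtained by swapping the order of summation and integration and substituting $u = n - t$. On the range $|\beta|\le 1/\Delta$ one has $|\pi\beta\Delta/2|\le \pi/2$, so the elementary inequality $|\sin x|\ge (2/\pi)|x|$ on $[-\pi/2,\pi/2]$ yields $|I(\beta)|\ge \Delta/\pi$. Combining this lower bound with the identity gives the pointwise estimate $|S(\beta)|^2 \ll \Delta^{-2}|\widehat{g}(\beta)|^2$ throughout $|\beta|\le 1/\Delta$.

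Integrating and extending the domain of integration to the full real line, Plancherel then supplies
$$
\int_{|\beta|\le 1/\Delta} |S(\beta)|^2\,\dif\beta
\;\ll\; \Delta^{-2}\int_{-\infty}^{\infty}|\widehat{g}(\beta)|^2\,\dif\beta
\;=\; \Delta^{-2}\int_{N-\Delta/2}^{2N} |g(t)|^2\,\dif t,
$$
since $g$ is supported on $[N-\Delta/2,2N]$. The remaining step is to split this last integral into the main range $[N,2N]$, which contributes the first term on the right-hand side of the lemma, and the short boundary range $[N-\Delta/2,N]$.

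The boundary contribution is handled trivially: for $t\in[N-\Delta/2,N]$ the sum defining $g(t)$ contains at most $\Delta/2$ nonzero terms, so $|g(t)|\le (\Delta/2)\sup_{N<n\le 2N}|a_n|$, and integrating $|g(t)|^2$ over an interval of length $\Delta/2$ produces at most $(\Delta^3/8)(\sup|a_n|)^2$. After the factor $\Delta^{-2}$ this is absorbed into the claimed term $\Delta(\sup|a_n|)^2$. No step here looks like a serious obstacle; the only point requiring a bit of care is verifying the lower bound on $|I(\beta)|$ (to ensure a clean constant), and confirming that the boundary near $t=2N$ needs no separate treatment because the support of $g$ terminates there without truncation.
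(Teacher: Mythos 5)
Your proof is correct, and it is essentially the same argument that the paper relies on: the paper supplies no independent proof, simply citing Mikawa's Lemma~1 and identifying the statement as a form of the Sobolev--Gallagher inequality, and your Fourier-transform/Plancherel argument is precisely the standard proof of that inequality that Mikawa uses. One small slip: from $\widehat g(\beta)=S(-\beta)I(\beta)$ and $|I(\beta)|\ge\Delta/\pi$ you actually obtain $|S(\beta)|\ll\Delta^{-1}|\widehat g(-\beta)|$, not $\Delta^{-1}|\widehat g(\beta)|$; this does not matter after integration over the symmetric range $|\beta|\le1/\Delta$, but the pointwise statement as written has the wrong sign. Another minor point: an interval of length $\Delta/2$ contains up to $\lfloor\Delta/2\rfloor+1\le\Delta$ integers (not $\Delta/2$), so the boundary estimate near $t=N$ picks up an extra absolute constant, again harmless for the $\ll$ bound. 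With those cosmetic corrections the argument is complete and gives the stated result.
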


\begin{proof}
This is Lemma 1 in \cite{mikawa} and is a form of the
Sobolev--Gallagher inequality.
\end{proof}

The next two lemmas are modified versions of Lemmas 2 and 5 in
\cite{mikawa}, respectively, 
where the role of $\Lambda(n)$ is now taken by $d_3(n)$. 

\begin{lemma} \label{mikawalemma2} Let $k,q\in \mathbb{N}$,
  $\Delta,N>1$ and let $\chi$ be a character modulo $q$.  
Set $\delta(\chi)=1$ if $\chi$ is principal and
$\delta(\chi)=0$ otherwise. Define
$$
S(k,\chi,\Delta,N)=\int\limits_{N}^{2N} \left| \sum\limits_{x<n\le x+\Delta} \chi(n)d_3(kn)  - \delta(\chi)\cdot \Res_{s=1} 
 \frac{\left((x+\Delta)^s-x^s\right)F_k(\chi_0,s)}{s} \right|^2 \dif x.
$$
Let $0<\eta<5/12$ be given. Then there exist positive $\delta$ and $\delta_3$ depending on $\eta$ such that if $k,q\le N^{\delta}$ and $N^{1/6+\eta}\le \Delta\le N^{1-\eta}$,  we have
\begin{equation} \label{Sbound}
S(k,\chi,\Delta,N)\ll \Delta^2N^{1-\delta_3}.
\end{equation}
\end{lemma}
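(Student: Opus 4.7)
The plan is to mimic Mikawa's \cite{mikawa} contour-integral approach, replacing $-L'/L$ by the series $F_k(\chi,s)$ and replacing the fourth moment input by the sixth moment of Dirichlet $L$-functions on the line $\Re s = 7/12$. First I apply Perron's formula to $F_k(\chi,s)$ at the line $\Re s = c:=1+1/\log N$ and height $T$ (to be chosen later) for $y\in\{x,x+\Delta\}$; subtracting, I obtain
$$
\sum_{x<n\le x+\Delta}\chi(n)d_3(kn)=\frac{1}{2\pi i}\int_{c-iT}^{c+iT}F_k(\chi,s)\,\frac{(x+\Delta)^s-x^s}{s}\,\dif s+O\!\left(\frac{(kq)^\varepsilon N^{1+\varepsilon}}{T}\right).
$$
Shifting the contour to $\Re s = \sigma := 7/12$, I cross the triple pole of $F_k(\chi_0,s)$ at $s=1$ when $\chi$ is principal; its residue is exactly the main term subtracted in the definition of $S(k,\chi,\Delta,N)$. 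For non-principal $\chi$ no pole is encountered. The convexity bound for $L(\chi,s)$ combined with the preceding lemma's estimates $|F_k(\chi,s)|\ll(kq)^\varepsilon|L(\chi,s)|^3$ (or $|\zeta(s)|^3$ when $\chi=\chi_0$) shows that the horizontal portions at $\Im s=\pm T$ contribute the same order as the Perron error.

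The crux is to control the vertical contribution
$$
J(x):=\frac{1}{2\pi i}\int_{\sigma-iT}^{\sigma+iT}F_k(\chi,s)\,\frac{(x+\Delta)^s-x^s}{s}\,\dif s=\int_x^{x+\Delta}g(u)\,\dif u,
$$
where $g(u):=\frac{1}{2\pi i}\int_{\sigma-iT}^{\sigma+iT}F_k(\chi,s)u^{s-1}\,\dif s$. Cauchy--Schwarz in $u$ followed by Fubini over $x\in[N,2N]$ yields
$$
\int_N^{2N}|J(x)|^2\,\dif x\le\Delta^2\int_N^{3N}|g(u)|^2\,\dif u.
$$
The substitution $u=e^v$ converts the inner integral into the $L^2$ norm of a Fourier truncation on a short interval; Parseval together with $\sigma>1/2$ gives
$$
\int_N^{3N}|g(u)|^2\,\dif u\ll N^{2\sigma-1}\int_{-T}^T|F_k(\chi,\sigma+it)|^2\,\dif t\ll(kq)^\varepsilon N^{2\sigma-1}\int_{-T}^T|L(\chi,\sigma+it)|^6\,\dif t.
$$
Invoking the sixth moment estimate $\int_0^T|L(\chi,\sigma+it)|^6\,\dif t\ll(qT)^{1+\varepsilon}$ at $\sigma=7/12$, as in \cite[Eq.\ (8.80)]{ivic} and its hybrid analogues for Dirichlet $L$-functions, bounds the contribution of $J$ to $S(k,\chi,\Delta,N)$ by $\Delta^2 N^{1/6+\varepsilon}T$, after absorbing $k,q\le N^\delta$ into $N^\varepsilon$.

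Combining with the Perron/horizontal squared error $N\cdot(N^{1+\varepsilon}/T)^2=N^{3+\varepsilon}/T^2$, we arrive at
$$
S(k,\chi,\Delta,N)\ll\Delta^2 N^{1/6+\varepsilon}T+\frac{N^{3+\varepsilon}}{T^2}.
$$
Requiring both terms to be $\ll\Delta^2 N^{1-\delta_3}$ forces $N^{1+\delta_3/2}/\Delta\ll T\ll N^{5/6-\delta_3}$, which is consistent exactly when $\Delta\gg N^{1/6+3\delta_3/2}$; for $\delta_3\le 2\eta/3$ this is ensured by $\Delta\ge N^{1/6+\eta}$, so $\delta_3=\eta/2$ works. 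The main obstacle is precisely this tight parameter balance: the whole argument hinges on a sixth moment bound at $\sigma=7/12$ holding uniformly in the character family with $q\le N^\delta$, and the exponent $1/6$ in the hypothesis $\Delta\ge N^{1/6+\eta}$ is exactly the one forced by the moment input. The upper bound $\Delta\le N^{1-\eta}$ keeps $[x,x+\Delta]\subseteq[N,3N]$ and absorbs lower-order terms into $N^\varepsilon$, while the restriction $\eta<5/12$ merely records that the admissible window $[N^{1/6+\eta},N^{1-\eta}]$ is non-empty.
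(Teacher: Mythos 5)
Your reconstruction is essentially the paper's route, which is to cite Ivi\'c's Corollary~1 from \cite{ivic-art} for the case $k=q=1$ and then observe that the same contour-shift-plus-sixth-moment argument goes through in general, with \eqref{Fprincbound} and \eqref{Fnonprincbound} supplying $|F_k|\ll(kq)^\varepsilon|L(\chi,s)|^3$ and Meurman's Atkinson formula for $L$-functions supplying the sixth moment at $\Re s=7/12$; you have simply unwound the citation and carried out the Perron/Gallagher/Parseval steps explicitly, arriving at the same parameter balance $\Delta\gg N^{1/6+\eta}$ driven by the exponent $7/12$.

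One small imprecision worth flagging: you say the horizontal segments are controlled by the \emph{convexity} bound. With the crude bound $|(x+\Delta)^s-x^s|\ll N^{\Re s}$, convexity gives the integrand $(qT)^{3(1-\alpha)/2}N^\alpha$, which is increasing in $\alpha$ (hence harmlessly bounded by $N^{1+\varepsilon}/T$) only when $N>(qT)^{3/2}$; but your admissible range of $T$ reaches up to about $N^{5/6-\eta}$, beyond $N^{2/3}$, where that monotonicity fails. What is actually needed (and what the paper itself uses in section~\ref{s:G} for $\zeta$) is a Weyl-type subconvexity bound $L(\chi,\alpha+it)\ll(q|t|)^{(1-\alpha)/3+\varepsilon}$, under which the integrand $T^{1-\alpha}N^\alpha$ is increasing throughout $[\sigma,c]$ for all $T\le N$, so the horizontal pieces are genuinely $O(N^{1+\varepsilon}/T)$. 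Since such a subconvexity estimate is standard for $q\le N^\delta$, this is a presentational slip rather than a gap, but it should be stated correctly.
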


\begin{proof}
For $k=q=1$, Ivi\'c \cite[Corollary 1]{ivic-art} proved that there exists $\delta_3>0$ depending on $\eta$ such that if
$N^{1/6+\eta}\le \Delta\le N^{1-\eta}$, we have
$$
S(1,\chi_0,\Delta,N)=\int\limits_{N}^{2N}\ \left| \sum\limits_{x<n\le x+\Delta}
d_3(n) - \Res_{s=1} \frac{((x+\Delta)^s-x^s)\zeta^3(s)}{s}\right|^2
\dif x \ll \Delta^2N^{1-\delta_3}. 
$$
This is based on a bound for the sixth moment of the Riemann zeta
function of the expected order of magnitude on the line $\Re(s)=7/12$,
which we already made use of in section \ref{s:G}.
Ivi\'c's method can be easily generalised to yield \eqref{Sbound}. The only additional inputs are the following. If $\chi$ is principal, then we use the bound \eqref{Fprincbound}. If 
$\chi$ is non-principal, then we use the bound 
\eqref{Fnonprincbound} and 
a bound for the sixth moment of $L(\chi,s)$ in place of
$\zeta(s)$. Indeed, for any given $\varepsilon>0$, we have the bound
$$
\int\limits_{-T}^T \left|L\left(\chi,\frac{7}{12}+it\right)\right|^6
\dif t \ll T(NT)^{\varepsilon}, 
$$
provided that $q\le N^{\delta}$ with $\delta>0$ small enough. The
proof of this estimate is analogous to the proof of the corresponding
result for the Riemann zeta function and involves a generalisation of
the Atkinson mean square formula for $L$-functions due to Meurman
\cite{meurman}. 
\end{proof}

For the remainder of this section we suppose that 
$\alpha\in \RR$ is given and that there exist coprime 
integers $a,q$ such that 
$|\alpha-a/q|\le q^{-2}$ and $q<\Delta<N/2.$
Our main goal in this section is a proof of the following result.

\begin{lemma} \label{mikawalemma5}
Suppose that $\Delta>N^{1/3}$ and let
$$
J=J(\alpha,\Delta):=\int\limits_{N}^{2N} \left| \sum\limits_{t<n\le
    t+\Delta} d_3(n)e(\alpha n)\right|^2 \dif t. 
$$
Then there exists $\delta_4>0$ and $F>0$ such that
$$
J\ll (\log N)^F\left(\Delta N\left(N^{1/3}+\Delta
    q^{-1/2}+(q\Delta)^{1/2}+q\right)+\Delta^2N^{1-\delta_4}+\Delta^3\right). 
$$
\end{lemma}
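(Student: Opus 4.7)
The strategy is to mirror Mikawa's treatment of the analogous bound for $\Lambda(n)$ in \cite[Lemma~5]{mikawa}, with the Vaughan identity replaced by the elementary factorisation $d_3(n)=\sum_{abc=n}1$. I would dyadically partition the triple of variables into ranges $a\sim A$, $b\sim B$, $c\sim C$ with $ABC\asymp N$. Since the three variables play symmetric roles, one may assume $A\le B\le C$, whence $A\le N^{1/3}$, and only $O((\log N)^{3})$ such blocks arise. It thus suffices to control a single dyadic block.

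For each block I distinguish two regimes. In the \emph{Type I} regime, where $A$ is small enough that for fixed $(a,b)$ the variable $c$ runs over a progression of length $\approx\Delta/(ab)$, the inner sum
\[
\sum_{t/(ab)<c\le(t+\Delta)/(ab)} e(\alpha abc)
\]
is a truncated geometric series, bounded by $\min(\Delta/(ab),\|\alpha ab\|^{-1})$. Substituting into $J$, opening the square and integrating over $t\in[N,2N]$ reduces the task to estimating a sum of minima over pairs $(a,b),(a',b')$. The Dirichlet approximation $|\alpha-a/q|\le q^{-2}$ then produces the standard count of integers $m\asymp AB$ with $\|\alpha m\|$ small. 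Summing over the dyadic scales $A,B$ yields precisely the four-term contribution $\Delta N(N^{1/3}+\Delta q^{-1/2}+(q\Delta)^{1/2}+q)$, up to a factor of $(\log N)^{F}$.

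In the \emph{Type II} regime, where $A,B,C$ are all of comparable magnitude $\approx N^{1/3}$, I would apply Cauchy--Schwarz to detach one variable (say $a$), giving
\[
\biggl|\sum_{a,b,c} e(\alpha abc)\biggr|^{2}\ll A\sum_{a\sim A}\biggl|\sum_{b,c} e(\alpha abc)\biggr|^{2},
\]
then expand the inner square to obtain a linear exponential sum in $a$ at frequency $\alpha(bc-b'c')$, which can be bounded via the rational approximation. The dependence of the $b,c$ summation ranges on the integration variable $t$ is the awkward point: to handle it cleanly I would apply Lemma~\ref{mikawalemma1} (Sobolev--Gallagher) so as to transfer the $t$-integral onto an unrestricted exponential sum to which the bilinear estimate applies directly.

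The remaining pieces are accounted for as follows. The $\Delta^{2}N^{1-\delta_{4}}$ term arises by invoking Lemma~\ref{mikawalemma2} with $k=q=1$ and $\chi=\chi_{0}$ on the main-term piece of the windowed $d_3$ sum of length $\Delta$; the hypothesis $\Delta>N^{1/3}$ ensures this application is valid. The $\Delta^{3}$ term is a trivial diagonal contribution from pairs $n=m$ when the square is expanded. The principal obstacle, exactly as in Mikawa's argument, lies in the Type II regime: the successive Cauchy--Schwarz and variable changes must be organised so that the unavoidable losses are absorbed into the $(\log N)^{F}$ factor, rather than accumulating to destroy the savings provided by the rational approximation to $\alpha$.
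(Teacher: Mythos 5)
Your overall strategy is the right one and matches the paper's: decompose $d_3=\mathbf{1}\ast\mathbf{1}\ast\mathbf{1}$, dyadically localise the three factors to $a\sim A$, $b\sim B$, $c\sim C$ with $ABC\asymp N$ and $A\le B\le C$ (so $A\le N^{1/3}$), and then estimate each block by Type I or Type II style arguments. This is indeed how the paper proceeds, applying the auxiliary Lemmas~\ref{mikawalemma6}, \ref{mikawalemma7} and \ref{mikawalemma8}. But there are two genuine gaps.

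First, your case division is incomplete. You treat the regime where $A$ is small (Type~I, geometric series in $c$) and the regime where $A,B,C\approx N^{1/3}$ (Type~II, Cauchy--Schwarz in $a$), but these do not cover the full range. Consider $A\approx N^{\delta_4/2}$ and $B\approx C\approx N^{1/2-\delta_4/4}$: here $A$ is tiny, yet for fixed $(a,b)$ the $c$-interval has length $\Delta/(ab)\approx\Delta/N^{1/2}$, which is \emph{shorter than $1$} for $\Delta<N^{1/2}$, so your Type~I geometric-series bound gives nothing; and the block is not of the balanced shape that your Type~II treatment assumes. The paper isolates exactly this intermediate regime as its Case~3 ($A\le N^{\delta_4}$, $N^{1/2-3\delta_4/2}\le B\le C\le N^{1/2+\delta_4/2}$) and needs a separate hybrid estimate, Lemma~\ref{mikawalemma7}, to dispose of it. Without a third case your argument does not close.

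Second, the attribution of the $\Delta^2N^{1-\delta_4}$ term is wrong. You propose to obtain it by ``invoking Lemma~\ref{mikawalemma2} with $k=q=1$ and $\chi=\chi_0$ on the main-term piece of the windowed $d_3$ sum,'' but $J$ is the full $L^2$ norm of $\sum_{t<n\le t+\Delta}d_3(n)e(\alpha n)$ on the minor arcs: there is no smooth main term to subtract from this twisted sum, and Lemma~\ref{mikawalemma2} (a variance bound for untwisted short sums of $\chi(n)d_3(kn)$) is simply not applicable here. In the paper the $\Delta^2N^{1-\delta_4}$ contribution enters through the $\Delta^2(N/U)^2$ term of Lemma~\ref{mikawalemma6}, which is harmless once $C\ge N^{1/2+\delta_4/2}$, and through the $\Delta^2\bigl(N^{1-\delta_4}+N^{7\delta_4}A^4B^{3/2}\bigr)$ term of Lemma~\ref{mikawalemma7}. (A smaller inaccuracy of the same flavour: you claim the whole four-term bracket $N^{1/3}+\Delta q^{-1/2}+(q\Delta)^{1/2}+q$ comes out of your Type~I estimate, whereas the $N^{1/3}$ and $q$ terms are produced by the bilinear Type~II bound, Lemma~\ref{mikawalemma8}, and $\Delta q^{-1/2}+(q\Delta)^{1/2}$ by the Type~I bound, Lemma~\ref{mikawalemma6}.)
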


The proof of this lemma requires some auxiliary results, namely 
slightly modified versions of Lemmas 6, 7 and 8 in \cite{mikawa}. 
Let $f$ and $g$ be arbitrary
sequences such that $|f(n)|\le\log n$ and $|g(n)|\le
d_5(n)$. Moreover, let $U,V,C>0$ and define 
\begin{align*}
J_1&:=\int\limits_{N}^{2N} \left| \sum\limits_{\substack{t<mn\le t+\Delta\smallskip\\ U\le m\le 2U}} g(n)e(\alpha mn)\right|^2 \dif t,\\
J_2&:=\int\limits_{N}^{2N} \left| \sum\limits_{\substack{t<dl\le
      t+\Delta\smallskip\\ C\le l\le 2C}}
  \left(\sum\limits_{\substack{mn=d\smallskip\\ U\le m\le
        2U\smallskip\\ V\le n\le 2V}} g(n) \right)e(\alpha
  dl)\right|^2 \dif t, \\
J_3&:=\int\limits_{N}^{2N} \left| \sum\limits_{\substack{t<mn\le
      t+\Delta\smallskip\\ U\le m\le 2U}} f(m)g(n)e(\alpha
  mn)\right|^2 \dif t. 
 \end{align*} 
Then we have the following bounds.

\begin{lemma} \label{mikawalemma6} 
There exists $F>0$ 
such that
$$
J_1\ll (\log N)^F\left(\Delta N\left(\Delta
    q^{-1/2}+(q\Delta)^{1/2}\right)+\Delta^2(N/U)^2+\Delta^3\right). 
$$
\end{lemma}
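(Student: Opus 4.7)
The plan is to follow the structure of Mikawa's proof of \cite[Lemma 6]{mikawa}, replacing the inputs specific to $\Lambda(n)$ by divisor bounds appropriate to the hypothesis $|g(n)| \le d_5(n)$. The argument has three ingredients: opening the square and bounding the diagonal, exploiting cancellation in the off-diagonal, and aggregating via the Dirichlet approximation $|\alpha - a/q| \le q^{-2}$.

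\textbf{Opening the square.} Expanding $|S_t|^2$ and exchanging summation with integration gives
\[
J_1 = \sum_{\substack{U \le m_1, m_2 \le 2U \\ n_1, n_2}} g(n_1)\,\overline{g(n_2)}\, e\!\left(\alpha(m_1 n_1 - m_2 n_2)\right)\, \mu(m_1 n_1, m_2 n_2),
\]
where $\mu(k_1, k_2) := \operatorname{meas}\{t \in [N, 2N] : t < k_i \le t + \Delta,\ i = 1, 2\}$ satisfies $\mu(k_1,k_2) \le (\Delta - |k_1 - k_2|)_+$. The support forces $k_i \in [N - \Delta, 2N + \Delta]$ and $|k_1 - k_2| < \Delta$, so $n_i \ll N/U$. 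The diagonal $m_1 n_1 = m_2 n_2 = K$ contributes at most
\[
\Delta \sum_{K \ll N} \Bigl(\sum_{\substack{mn = K \\ U \le m \le 2U}} |g(n)|\Bigr)^2 \ll \Delta \sum_{K \ll N} d_6(K)^2 \ll \Delta N (\log N)^F,
\]
using $\sum_{mn=K} d_5(n) \le d_6(K)$. This is absorbed into the $\Delta N (q\Delta)^{1/2}$ term.

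\textbf{Off-diagonal and cancellation.} For $m_1 n_1 \ne m_2 n_2$ one reorganises the sum by fixing $(n_1, n_2)$ and executing the linear exponential sums in $m_1$ and $m_2$ by the standard bound $|\sum_{m \in I} e(\beta m)| \ll \min(|I|, \|\beta\|^{-1})$; the kernel $\mu$ contributes the factor $\Delta$ together with Lipschitz control in $h = m_1 n_1 - m_2 n_2$. The resulting quantities are controlled by the product $\min(U, \|\alpha n_1\|^{-1})\min(U, \|\alpha n_2\|^{-1})$ summed against $|g(n_1)g(n_2)| \le d_5(n_1)d_5(n_2)$ and $\mu$. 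Using the classical estimate
\[
\sum_{k \le K} \min\!\left(Y, \|\alpha k\|^{-1}\right) \ll (K/q + 1)(Y + q \log q),
\]
based on the Dirichlet approximation, with $K \asymp N/U$ and $Y = U$, and then weighting by the $\mu$-factor, produces the two main cancellation contributions $\Delta N \cdot \Delta q^{-1/2}$ and $\Delta N \cdot (q\Delta)^{1/2}$. The trivial fallback on ranges where the $\min(U, \|\alpha n\|^{-1})$ bound degenerates to $U$ supplies the boundary term $\Delta^2(N/U)^2$ (from bounding the pair sum trivially by $(N/U)^2$ against $\Delta^2$), while the near-diagonal term $|h| \ll 1$ handled via divisor counting yields the $\Delta^3$ term.

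\textbf{Main obstacle.} The principal technical burden is interlocking three estimates simultaneously: the divisor-type bound on the coefficients counting solutions of $m_1 n_1 - m_2 n_2 = h$, the linear exponential-sum control on the $m_i$-sums, and the aggregation step via Dirichlet approximation. Mikawa's original treatment exploits mean-value properties of $\Lambda(n)$ that must be replaced with $d_5$-convolution estimates. The hardest point is ensuring that the denominator $q$ governing $\alpha$ passes unchanged through the reorganisation — in particular that the $\mu$-weight does not spoil the application of the $\sum \min(Y, \|\alpha k\|^{-1})$ estimate — so that no polynomial factor of $N$ is lost; once this is handled, combining the diagonal bound with the off-diagonal cancellation and the two trivial contributions yields the claimed estimate.
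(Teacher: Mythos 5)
The paper does not actually prove this lemma: it cites Mikawa's Lemma~6 verbatim and remarks that the only change is replacing the range $m\ge U$ by $U\le m\le 2U$, so there is no detailed argument in the paper against which to check your reconstruction. Your sketch is consistent in outline with what Mikawa's argument must look like (open the square, diagonal by $\mathbf 1 * d_5 = d_6$ and a mean square of $d_6$, off-diagonal via linear exponential sums in the $m_i$ and the aggregated Dirichlet-approximation bound $\sum_{k\le K}\min(Y,\|\alpha k\|^{-1})\ll (K/q+1)(Y+q\log q)$, plus trivial fallbacks for the $\Delta^2(N/U)^2$ and $\Delta^3$ terms).

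One point you should make precise: as written, the decoupling into $\min(U,\|\alpha n_1\|^{-1})\min(U,\|\alpha n_2\|^{-1})$ after ``fixing $(n_1,n_2)$ and executing the linear exponential sums in $m_1$ and $m_2$'' is not immediate from the opened square, because the kernel $\mu(m_1n_1,m_2n_2)$ couples $m_1$ and $m_2$ and is not a product. The clean route is not to open the square at all at that stage: for each fixed $t$ the $m$-intervals $I_n(t)=\{m:U\le m\le 2U,\ t<mn\le t+\Delta\}$ are genuine intervals depending only on $(n,t)$, so $\bigl|\sum_{m\in I_n(t)}e(\alpha mn)\bigr|\ll\min\bigl(|I_n(t)|,\|\alpha n\|^{-1}\bigr)$ with $|I_n(t)|\ll\min(U,\Delta/n+1)$, and then one expands $|S(t)|^2$ as a double sum over $(n_1,n_2)$, getting the product of minima before integrating in $t$. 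You should also retain the third term $\Delta/n+1$ in the minimum; it is what allows the dyadic aggregation over $n\ll N/U$ to produce both the $\Delta q^{-1/2}$ and $(q\Delta)^{1/2}$ pieces after Cauchy--Schwarz, and its omission is where the $\Delta^2(N/U)^2$ and $\Delta^3$ terms would otherwise be hard to account for. These are presentational rather than structural flaws, and with them repaired your sketch matches the route the paper is implicitly invoking.
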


\begin{proof} This is Lemma 6 in \cite{mikawa} with the summation condition $m\ge U$ being replaced by $U\le m\le 2U$. The proof is similar.
\end{proof} 

\begin{lemma} \label{mikawalemma7} 
There exists $\delta_4>0$ and $F>0$ 
such that
$$
J_2\ll (\log N)^F\left(\Delta N\left(\Delta q^{-1/2}+(q\Delta)^{1/2}\right)+\Delta^3\right)+\Delta^2\left(N^{1-\delta_4}+N^{7\delta_4}U^{3/2}V^4\right).
$$
\end{lemma}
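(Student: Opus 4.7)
The plan is to adapt Mikawa's Lemma 7 proof from $\Lambda(n)$ to $d_3(n)$. The three error terms in the target bound correspond to three contributions: the $(\log N)^F$ terms come from an application of the Type I bound in Lemma \ref{mikawalemma6}; the $\Delta^2 N^{7\delta_4}U^{3/2}V^4$ term from a ``diagonal'' counting after Cauchy--Schwarz; and the crucial $\Delta^2 N^{1-\delta_4}$ term from Lemma \ref{mikawalemma2} (which plays the role of the zero-density input in Mikawa's original argument).

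I would first rewrite the inner sum of $J_2$ as $\sum_l\sum_m\sum_n g(n)e(\alpha mnl)\mathbf{1}[t<mnl\le t+\Delta]$ and apply Cauchy--Schwarz in the pair $(m,l)$ to obtain
$$J_2 \le UC\sum_{m\asymp U}\sum_{l\asymp C}\int_N^{2N}\Big|\sum_{\substack{n\asymp V\\ t<mnl\le t+\Delta}}g(n)e(\alpha mnl)\Big|^2\,dt.$$
For each fixed $m,l$, write $k=ml$ and change variables $t=kt'$: the inner integral becomes a weighted short-interval mean square of the twisted sum $\sum_{n\asymp V,\,t'<n\le t'+\Delta/k}g(n)e(\alpha kn)$. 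I would decompose this according to whether $\alpha k$ is well-approximated by a rational with small denominator. On the ``major arc'' part I would invoke Lemma \ref{mikawalemma2} with $k$ as in that lemma and $\chi$ the principal character to a modulus inherited from the Dirichlet approximation of $\alpha k$, yielding a contribution $\ll \Delta^2 V^{1-\delta_3}/k^2$ per $(m,l)$; summed over $m\asymp U$ and $l\asymp C$ this produces the $\Delta^2 N^{1-\delta_4}$ term.

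For the complementary ``minor arc'' part, where the oscillation of $e(\alpha kn)$ dominates, I would apply Lemma \ref{mikawalemma6} with the combined variable $ml$ playing the role of the outer variable, yielding the $(\log N)^F(\Delta N(\Delta q^{-1/2}+(q\Delta)^{1/2})+\Delta^3)$ contribution. Separately, the diagonal part arising from $m_1 n_1 l_1 = m_2 n_2 l_2$ after Cauchy--Schwarz expansion is bounded combinatorially: Cauchy--Schwarz on the $n$-variable (using $\sum_{n\asymp V}|g(n)|^2\ll V(\log N)^{O(1)}$) together with the sup-bound $\Delta$ on the $t$-integrand and a count of $6$-tuples $(m_i,n_i,l_i)$ with $m_1 n_1 l_1 = m_2 n_2 l_2$ produces the $\Delta^2 N^{7\delta_4}U^{3/2}V^4$ term, with $N^{7\delta_4}$ absorbing divisor-function losses.

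The main obstacle will be tracking the rational approximation of $\alpha k$ as $k=ml$ varies: since we only know $|\alpha-a/q|\le q^{-2}$ with $q<\Delta$, the approximation of $\alpha k$ must be recovered from that of $\alpha$ via a continued-fraction / Dirichlet argument that cleanly controls the loss in terms of $(k,q)$, and this loss must then be reconciled with the hypothesis $N^{1/6+\eta}\le \Delta\le N^{1-\eta}$ required by Lemma \ref{mikawalemma2} after rescaling $N$ by $N/k$. The choice of $\delta_4>0$ is then determined as a strictly positive function of $\eta$, $\delta$ and the exponent $\delta_3$ from Lemma \ref{mikawalemma2}, balancing the major-arc gain against the minor-arc Type I loss.
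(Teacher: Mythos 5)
Your opening move, a Cauchy--Schwarz in the pair $(m,l)$, already breaks the argument. After that step you are left with the mean square of a sum over $n$ alone in the interval $t/(ml)<n\le (t+\Delta)/(ml)$, which has length $\Delta/(ml)\asymp \Delta V/N$. But in the application (Case~3 of the proof of Lemma~\ref{mikawalemma5}) the small variable is $n\asymp V=A\le N^{\delta_4}$ and the other two are $m\asymp U=B\asymp N^{1/2}$, $l\asymp C\asymp N^{1/2}$, while $\Delta\le N^{1-\eta}$ with $\delta_4<\eta$. Thus $\Delta V/N\le N^{\delta_4-\eta}<1$: the $n$-interval is generically empty or contains a single integer. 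Lemma~\ref{mikawalemma2} (the short-interval mean-square input that you want to use as the source of the $N^{-\delta_4}$ saving) requires the short-interval length, after rescaling by $k=ml\asymp N/V$, to be at least $(N/k)^{1/6+\eta}$, which is vastly larger than $\Delta/k$ here. So the ``major-arc'' branch of your dichotomy cannot produce the $\Delta^2N^{1-\delta_4}$ term. Compounding this, the Cauchy--Schwarz itself is wasteful: for a fixed $t$ only $O(\Delta N^{\varepsilon})$ of the $\asymp UC\asymp N/V$ pairs $(m,l)$ contribute, so the factor $UC$ overshoots by $\asymp N/(V\Delta)>1$.

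There are also structural inconsistencies in the other two branches. Lemma~\ref{mikawalemma6} bounds $J_1$, which has the bilinear shape $\sum_{t<mn\le t+\Delta,\,U\le m\le 2U}g(n)e(\alpha mn)$ \emph{inside} the square; once you have pulled $(m,l)$ outside via Cauchy--Schwarz there is no such structure left, so ``applying Lemma~\ref{mikawalemma6} with $ml$ as the outer variable'' is not available in the form you describe. And the ``diagonal'' term $\Delta^2N^{7\delta_4}U^{3/2}V^4$ arising from $m_1n_1l_1=m_2n_2l_2$ is asserted, not derived: a naive count of such $6$-tuples is $\ll N^{1+\varepsilon}$ by the divisor bound, which bears no resemblance to $U^{3/2}V^4$; the specific exponents must come from the actual mechanism of Mikawa's proof, which you have not reconstructed. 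Finally, as you yourself flag, the passage from a rational approximation of $\alpha$ to one of $\alpha k$ (with $k=ml$ ranging up to $\asymp N/V$) is unresolved and is not a minor technicality, since the effective denominator $q/(q,k)$ can collapse. The paper's own proof is simply to invoke Mikawa's Lemma~7 with the dyadic ranges $U\le m\le 2U$, $V\le n\le 2V$, $C\le l\le 2C$ in place of $m\le U$, $n\le V$; the decomposition and the origin of the $N^{1-\delta_4}$ and $U^{3/2}V^4$ terms there are different from what you propose, and your route as written does not close.
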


\begin{proof} This is Lemma 7 in \cite{mikawa} with an extra summation condition $C\le l\le 2C$ included and the summation conditions 
$m\le U$ and $n\le V$ being replaced by $U\le m\le 2U$ and $V\le n\le 2V$. The proof is similar. 
\end{proof}

\begin{lemma} \label{mikawalemma8} If $U<\Delta$, then
there exists $F>0$ such that
$$
J_3\ll \Delta N(\log
N)^F\left(U+\frac{\Delta}{q}+\frac{\Delta}{U}+q\right). 
$$
\end{lemma}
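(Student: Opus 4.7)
The plan is to treat this as a Type~I short-interval exponential-sum bound. First I would separate the variables and apply Cauchy--Schwarz to the outer $m$-sum: writing
\[
T(t):=\sum_{\substack{t<mn\le t+\Delta\\ U\le m\le 2U}}f(m)g(n)e(\alpha mn)=\sum_{U\le m\le 2U}f(m)S_m(t),\qquad S_m(t):=\!\!\sum_{t/m<n\le(t+\Delta)/m}\!\!g(n)e(\alpha mn),
\]
the bound $|f(m)|\le\log m$ gives
\[
J_3\ll U(\log N)^2\sum_{U\le m\le 2U}\int_N^{2N}|S_m(t)|^2\,\d t.
\]

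For each fixed $m$ I would expand $|S_m(t)|^2$, interchange the order of summation and integration, and reorganise the inner double sum by the shift $r:=n_1-n_2$. The $t$-integration produces a weight of length $(\Delta-m|r|)_+$, up to negligible boundary corrections from the intersection with $[N,2N]$, so that
\[
\int_N^{2N}|S_m(t)|^2\,\d t\ll\sum_{|r|<\Delta/m}(\Delta-m|r|)\,e(\alpha mr)\,G_m(r),
\]
where $G_m(r):=\sum_n g(n+r)\overline{g(n)}$ with $n$ in a subinterval of $(N/m,2N/m]$. The diagonal $r=0$ is controlled by the classical estimate $\sum_{n\le V}d_5(n)^2\ll V(\log V)^F$, contributing $\Delta(N/m)(\log N)^F$ per $m$; summing over $m\in[U,2U]$ and multiplying by the $U(\log N)^2$ from Cauchy--Schwarz yields the $\Delta NU(\log N)^F$ term of the lemma.

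For the off-diagonal shifts $r\neq 0$, a Shiu-type bound gives $G_m(r)\ll(N/m)(\log N)^F$ uniformly in $r$. Applying partial summation in $r$ together with the standard estimate $\bigl|\sum_{|r|\le R}e(\alpha mr)\bigr|\ll\min(R,\|\alpha m\|^{-1})$, the off-diagonal contribution becomes
\[
\ll U(\log N)^F\Delta N\sum_{U\le m\le 2U}\frac{1}{m}\min\!\Bigl(\tfrac{\Delta}{m},\|\alpha m\|^{-1}\Bigr).
\]
Under the Diophantine hypothesis $|\alpha-a/q|\le q^{-2}$ with $(a,q)=1$, the classical estimate
\[
\sum_{m\le 2U}\min\!\left(X,\|\alpha m\|^{-1}\right)\ll\bigl(X+U+UX/q+q\bigr)(\log N)^F,
\]
applied with $X=\Delta/U$ and the trivial bound $1/m\le 1/U$, delivers the remaining $\Delta N(\Delta/U+\Delta/q+q)(\log N)^F$ contribution, completing the proof modulo logarithmic factors.

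The main obstacle is the uniform-in-$r$ shifted correlation $G_m(r)\ll(N/m)(\log N)^F$: this is a binary correlation of $d_5$-bounded arithmetic functions over short intervals, and the uniformity in the shift $r$ is essential. I would invoke Shiu's theorem, or an elementary count in the spirit of Mikawa's original proof. Two further delicate points are the bookkeeping of boundary corrections lost in the $t$-integration (handled by truncating $n$ to an interval slightly inside $(N/m,2N/m]$, with the defect absorbed into the $(\log N)^F$ factor) and the correct balancing $X=\Delta/U$ in the Diophantine sum, since any larger choice of $X$ would inflate the $UX/q$ contribution beyond the target $\Delta/q$.
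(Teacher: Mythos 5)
The paper gives no internal proof here: the argument is a one-line citation to Lemma~8 of Mikawa \cite{mikawa}, with the minor change of the $m$-range from $m\le U$ to $U\le m\le 2U$. Your from-scratch reconstruction has a reasonable skeleton --- Cauchy--Schwarz in $m$, opening $\int_N^{2N}|S_m(t)|^2\,\dif t$ into a shift sum with Fej\'er weight $(\Delta-m|r|)_+$, handling $r=0$ via $\sum_n d_5(n)^2\ll V(\log V)^F$, and then a Diophantine sum over $m$ --- and the final bookkeeping does land on the stated exponents, with the diagonal supplying $\Delta NU$ and the sum $\sum_{m\sim U}\min(\Delta/U,\|\alpha m\|^{-1})$ supplying $\Delta/U+\Delta/q+q$.

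However, the off-diagonal step contains a genuine gap. You propose to bound $\sum_{r\ne 0}(\Delta-m|r|)e(\alpha mr)G_m(r)$ by partial summation in $r$ against $|\sum_{r\le R}e(\alpha mr)|\ll\min(R,\|\alpha m\|^{-1})$, armed only with the pointwise bound $G_m(r)\ll(N/m)(\log N)^F$. But Abel summation requires a bound on the total variation $\sum_r|a_{r+1}-a_r|$ of $a_r=(\Delta-m|r|)G_m(r)$, which contains the term $\Delta\sum_{r<\Delta/m}|G_m(r+1)-G_m(r)|$. In the paper's applications $g$ is a restricted divisor function (e.g.\ $g=h_{B,C}$ in Case~1), so $G_m(r)$ is a binary divisor correlation that genuinely oscillates in $r$ (its main term carries an arithmetic factor akin to $\sigma_{-1}(r)$, which jumps between consecutive $r$), and the only estimate you have for the variation is $\ll(\Delta/m)\cdot(N/m)(\log N)^F$. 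Feeding that into Abel summation yields an off-diagonal contribution of size up to $\Delta^2N(\log N)^F$, which overwhelms the target $\Delta N(\Delta/U+\Delta/q+q)(\log N)^F$. You cannot extract the required cancellation from $e(\alpha mr)$ against such a rough weight by partial summation alone; you would need either a uniform-in-$r$ decomposition $G_m(r)=(\text{slowly varying in }r)+o(N/m)$, which is a nontrivial input for short-range correlations of divisor functions, or a structurally different handling of $\int|S_m|^2$ that keeps the exponential coupled to the nonnegative quantity $|\sum_n g(n)e(\theta n)|^2$. This is the step to revisit; Mikawa's original proof, which the paper simply cites, must treat it differently.
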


\begin{proof} This is Lemma 8 in \cite{mikawa}.
\end{proof}

We now turn to the proof of Lemma \ref{mikawalemma5}. To prove his
corresponding result \cite[Lemma 5]{mikawa}, with $\Lambda(n)$ in place of
$d_3(n)$, 
Mikawa employed a Vaughan type decomposition of $\Lambda$ due to
Heath-Brown. Instead, we use here  
the much simpler decomposition $d_3={\bf 1}\ast {\bf 1}\ast {\bf
  1}$.

\begin{proof}[Proof of Lemma \ref{mikawalemma5}]
For $N\le n\le 3N$, we may split 
$
d_3(n) = \sum_{abc=n} 1
$
into $O\left((\log N)^3\right)$ terms of the form
$$
d_{A,B,C}(n)=\sum\limits_{\substack{A\le a\le 2A\\ B\le b\le 2B\\ C\le
    c\le 2C\\abc=n}} 1, 
$$
with $ABC=N$. Using Cauchy--Schwarz, it follows that 
$$
J\ll \sup_{\substack{A\leq B\leq C\\ ABC=N}}  (\log N)^9 
\int\limits_{N}^{2N} \left| \sum\limits_{t<n\le t+\Delta}
  d_{A,B,C}(n)e(\alpha n)\right|^2 \dif t.
$$
Our argument breaks into the following 
three cases.

{\it Case 1}: Let $N^{\delta_4}\le A\le N^{1/3}$. We may write
$$
\int\limits_{N}^{2N} \left| \sum\limits_{t<n\le t+\Delta}
  d_{A,B,C}(n)e(\alpha n)\right|^2 \dif t =  
\int\limits_{N}^{2N} \left| \sum\limits_{\substack{t<mn\le t+\Delta\\
      A\le m\le 2A}} h_{B,C}(n)e(\alpha mn)\right|^2 \dif t, 
$$
with
$$
h_{B,C}(n):=\sum\limits_{\substack{B\le b\le 2B\\ C\le c\le 2C\\ bc=n}} 1.
$$
Now Lemma \ref{mikawalemma8} with $f= 1$ and $g=h_{B,C}$ yields the
existence of $F>0$ such that
\begin{align*}
\int\limits_{N}^{2N} \left| \sum\limits_{t<n\le t+\Delta}
  d_{A,B,C}(n)e(\alpha n)\right|^2 \dif t &\ll \Delta N (\log
N)^F\left(A+\frac{\Delta}{q}+\frac{\Delta}{A}+q\right)\\ 
&\ll \Delta N(\log N)^F\left(N^{1/3}+\Delta q^{-1}+\Delta
  N^{-\delta_4}+q\right). 
\end{align*} 

{\it Case 2}: Let $A\le N^{\delta_4}$ and $C\ge N^{1/2+\delta_4/2}$.  We have
\begin{equation} \label{transform}
\int\limits_{N}^{2N} \left| \sum\limits_{t<n\le t+\Delta} d_{A,B,C}(n)e(\alpha n)\right|^2 \dif t = 
\int\limits_{N}^{2N} \left| \sum\limits_{\substack{t<mn\le t+\Delta\\ C\le m \le 2C}} h_{A,B}(n)e(\alpha mn)\right|^2 \dif t.
\end{equation}
Now Lemma \ref{mikawalemma6} with $g=h_{A,B}$ yields 
the existence of $F>0$ such that
\begin{align*} 
\int\limits_{N}^{2N} \left| \sum\limits_{t<n\le t+\Delta}
  d_{A,B,C}(n)e(\alpha n)\right|^2 \dif t 
&\ll (\log N)^F\left(\Delta N\left(\Delta q^{-1/2}+(q\Delta)^{1/2}\right)+\Delta^2\left(N/C\right)^2+\Delta^3\right) \\
&\ll (\log N)^F\left(\Delta N\left(\Delta q^{-1/2}+(q\Delta)^{1/2}\right)+\Delta^2N^{1-\delta_4}+\Delta^3\right).
\end{align*} 

{\it Case 3}: Let $A\le N^{\delta_4}$ and $N^{1/2-3\delta_4/2}\le B
\le C\le N^{1/2+\delta_4/2}$.  By \eqref{transform} and the definition
of $h_{A,B}$, we have 
$$
\int\limits_{N}^{2N} \left| \sum\limits_{t<n\le t+\Delta} d_{A,B,C}(n)e(\alpha n)\right|^2 \dif t = 
\int\limits_{N}^{2N} \left| \sum\limits_{\substack{t<mn\le t+\Delta\\
      C\le m \le 2C}}  \left(\sum\limits_{\substack{A\le u\le 2A\\
        B\le v\le 2B\\ uv=n}} 1\right) e(\alpha mn)\right|^2 \dif t. 
$$
Lemma \ref{mikawalemma7} with $g= 1$ yields the existence of $F>0$
such that
\begin{align*} 
\int\limits_{N}^{2N} 
&\left| \sum\limits_{t<n\le t+\Delta} d_{A,B,C}(n)e(\alpha n)\right|^2 \dif t\\ 
&\ll (\log N)^F\left(\Delta N\left(\Delta q^{-1/2}+(q\Delta)^{1/2}\right)+\Delta^3\right)+\Delta^2\left(N^{1-\delta_4}+N^{7\delta_4}A^4B^{3/2}\right) \\
&\ll (\log N)^F\left(\Delta N\left(\Delta q^{-1/2}+(q\Delta)^{1/2}\right)+\Delta^3\right)+\Delta^2N^{1-\delta_4},
\end{align*} 
provided that $\delta_4<1/51$.

Since $ABC=N$ and $A\le B\le C$, there are no remaining
cases. Combining everything therefore leads to the statement of 
Lemma \ref{mikawalemma5}.  
\end{proof}

Throughout the sequel, let $\chi_{0,n}$ be the principal character
modulo $n$. In our treatment of the major arcs, we will have to
approximate the term  
$$
T(q,x,\Delta):= \sum_{k\mid q} \frac{\mu(q^*)}{\varphi(q^*)} \sum_{x/k<m \le (x+\Delta)/k} \chi_{0,q^*} (m) d_3(mk),
$$
with $q^*=q/k$, by a simpler term of the form 
$$
\sum_{x<m \le x+\Delta} p_{q}(m),
$$
where $p_q(m)$ is a certain nicely behaved 
function. The remainder of this
section is devoted to the computation of this function. 

Using Lemma \ref{mikawalemma2} we shall aim to approximate
$T(q,x,\Delta)$ in mean square by 
\begin{equation} \label{T0}
T_0(q,x,\Delta):=\sum_{k\mid q} \frac{\mu(q^*)}{\varphi(q^*)} \Res_{s=1} \frac{\left((x+\Delta)/k)^s-(x/k)^s\right) F_{k,q^*}(s)}{s},
\end{equation}
where 
\begin{equation} \label{Fkqdef}
F_{k,q^*}(s)=F_k(\chi_{0,q^*},s),
\end{equation} 
in the notation of \eqref{eq:series}.
Let
\begin{equation} \label{pkqdef}
p_{k,q^*}(x):=\frac{\dif}{\dif x} \Res_{s=1} \frac{x^sF_{k,q^*}(s)}{s}.
\end{equation}
Then
$$
\Res_{s=1} \frac{\left((x+\Delta)/k)^s-(x/k)^s\right) F_{k,q^*}(s)}{s}
=\frac{1}{k} \int\limits_{x}^{x+\Delta} p_{k,q^*}\left(\frac{t}{k}\right)\dif t.
$$
Hence we may write
$$
T_0(q,x,\Delta)=\int\limits_{x}^{x+\Delta} p_q\left(t\right) \dif t,
$$
where
\begin{equation} \label{pqdef}
p_q(t):=\sum\limits_{k\mid q} \frac{\mu(q^*)}{\varphi(q^*)k}\cdot
p_{k,q^*}\left(\frac{t}{k}\right). 
\end{equation}

From \eqref{resbound}, it follows that 
\begin{equation} \label{pkqbound}
p_{k,q^*}(x) \ll (kq^*x)^{\varepsilon}, \quad 
p_{k,q^*}'(x) \ll \frac{(kq^*x)^{\varepsilon}}{x}.
\end{equation}
This together with 
\begin{equation} \label{phibound}
\varphi(q^*)\gg \frac{q^*}{\log\log 10q^*}
\end{equation}
implies that
\begin{equation} \label{pqbound}
p_q(n)\ll \frac{(qn)^{\varepsilon}}{q}.
\end{equation} 
Armed with these we 
may approximate the above integral by a sum. 
For $N\ll x < x+\Delta \ll N$
we see that 
\begin{equation} \label{T0pqrelation}
T_0(q,x,\Delta)=\sum\limits_{x<n\le x+\Delta} p_q\left(n\right)
+O\left(\frac{(qN)^{\varepsilon}}{q}\right). 
\end{equation}

\section{Treatment of the major arcs} 
Now we investigate the major arcs. Let $\alpha\in I_{q,a}$ and write $\alpha=a/q+\beta$. Then we have
$$
S(\alpha)=\sum\limits_{N<n\le 2N} d_3(n) \cdot e\left(\frac{an}{q}\right) \cdot e(\beta n).  
$$
Breaking the sum according to the value of $(n,q)$, we obtain
$$
 S( \alpha) = \sum_{k\mid q} \sum_{\substack{N<n \le 2N \\ (n,q)=k}}
 d_3(n) e \left( \frac{an}{q} \right) e(\beta n) 
= \sum_{k\mid q} \sum_{\substack{N/k <m \le 2N/k \\ (m,q^*)=1}}
d_3(mk) e \left( \frac{am}{q^*} \right) e \left( \beta mk \right),
$$
where $q=q^* k$.  Let $\tau(\chi)$ denote the Gauss sum associated to
a Dirichlet character. Then for $(a,r)=1$ we
have the familiar identity
\[ e \left( \frac{a}{r} \right) = \frac{1}{\varphi(r)} \sum_{\chi
  \bmod{r}} \chi (a) \tau(\overline{\chi}), 
\] 
relating additive to multiplicative characters (see, for 
example, \cite[Eq.\ (3.11)]{HIEK}).
Applying this we may write
\[ S(\alpha) = \sum_{k\mid q} \frac{1}{\varphi(q^*)} \sum_{\chi \bmod{q^*}} \tau(\overline{\chi}) \chi (a) \sum_{N/k< m \le 2N/k} \chi (m) d_3(mk) e \left( \beta mk \right). \]

We write $ S(\alpha) = a + b+c$,  where
 \begin{align} \label{adef}
 a &:= \sum_{N<m \le 2N} p_q (m) e \left( \beta m\right),\\
\label{bdef}
 b &:= \sum_{k\mid q} \frac{1}{\varphi(q^*)} \sum_{\substack{\chi \bmod{q^*}\\ \chi\not=\chi_{0,q^*}}} \tau(\overline{\chi}) \chi (a) \sum_{N/k<m \le 2N/k} \chi (m) d_3(mk) e \left( \beta mk \right),
 \end{align}
 and
 \begin{equation} \label{cdef}
 c:= \sum_{k\mid q} \frac{\mu(q^*)}{\varphi(q^*)} \sum_{N/k<m \le 2N/k} \chi_{0,q^*}(m) d_3(mk) e \left( \beta mk \right)-\sum_{N<m \le 2N} p_q (m) e \left( \beta m\right).
 \end{equation}
Furthermore, set
$$
\int\limits_{\majorarc} |a|^2 \dif \alpha = A^2, \quad \int\limits_{\majorarc} |b|^2 \dif \alpha = B^2, \quad  \int\limits_{\majorarc} |c|^2 \dif \alpha = C^2.
$$
Using Cauchy--Schwarz, we get 
\begin{equation} \label{majarc}
\int\limits_{\majorarc} \left| S(\alpha) \right|^2 e ( -h \alpha) \dif
\alpha = \int\limits_{\majorarc} \left| a \right|^2 e ( -h \alpha)
\dif \alpha + 
O\left( A(B+C) + B^2+C^2\right). 
\end{equation}
To estimate the error term in \eqref{majarc}, we need bounds for $A$, $B$ and $C$ which are provided by the following lemmas.

\begin{lemma} \label{Asize}
Let $\ve>0$. Then we have
$A^2 \ll N^{1+\varepsilon}$.
\end{lemma}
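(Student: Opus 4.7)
The plan is to unfold the integral $A^2$ over the major arcs $\mathfrak{M}$, exploit the fact that the summand $p_q(m)$ in \eqref{adef} is independent of the numerator $a$, and then apply the Sobolev--Gallagher inequality of Lemma \ref{mikawalemma1} together with the pointwise bound on $p_q$ supplied by \eqref{pqbound}. On each arc $I_{q,a}$ I would parametrise $\alpha = a/q + \beta$ with $|\beta| \le 1/(qQ)$; since $|a|^2$ depends only on $q$ and $\beta$, the $\varphi(q)$ coprime residues modulo $q$ all contribute identically, giving
$$A^2 = \sum_{q \le Q_1} \varphi(q) \int\limits_{-1/(qQ)}^{1/(qQ)} \left|\sum_{N<m\le 2N} p_q(m) e(\beta m)\right|^2 \dif\beta.$$

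For each $q \le Q_1 = N^\delta$ I would then apply Lemma \ref{mikawalemma1} with $\Delta = qQ$; since $qQ \le N^{1/4+\delta}$, the hypothesis $2 < \Delta < N/2$ holds provided $\delta$ is small enough. The lemma reduces the inner integral to the sum
$$(qQ)^{-2} \int\limits_N^{2N} \left|\sum_{t<n\le t+qQ/2} p_q(n)\right|^2 \dif t + qQ \sup_{N<n\le 2N} |p_q(n)|^2.$$
The pointwise estimate $|p_q(n)| \ll N^\varepsilon/q$ from \eqref{pqbound} bounds the short sum trivially by $qQ \cdot N^\varepsilon/q = Q N^\varepsilon$ and the supremum by $N^\varepsilon/q$, so each inner integral is $\ll N^{1+\varepsilon}/q^2 + Q N^\varepsilon/q$.

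Finally, multiplying by $\varphi(q) \le q$ and summing over $q \le Q_1$ yields
$$A^2 \ll N^{1+\varepsilon}\log Q_1 + Q Q_1 N^\varepsilon \ll N^{1+\varepsilon} + N^{1/4+\delta+\varepsilon} \ll N^{1+\varepsilon},$$
as desired. No serious obstacle is expected here: precisely because $p_q$ has pointwise size $1/q$, the exponential sum in \eqref{adef} is already small in average, and even the crude pointwise estimate inside Sobolev--Gallagher is amply sufficient. The only mild care needed is to check that the hypothesis of Lemma \ref{mikawalemma1} is satisfied throughout the range (which holds since $Q_1 Q = N^{1/4+\delta}$ is well below $N/2$) and to keep $\delta$ small relative to $\varepsilon$ in the final summation.
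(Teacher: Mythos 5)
Your argument is correct, but it takes a different technical route from the paper's. You reduce the inner $\beta$-integral to a short-interval mean square by invoking the Sobolev--Gallagher inequality of Lemma~\ref{mikawalemma1} with $\Delta = qQ$, and then feed in the pointwise bound \eqref{pqbound}. The paper instead estimates the integral directly: it expands $|a|^2$ as a double sum over $m_1,m_2$, integrates over $\beta \in [-1/(qQ),1/(qQ)]$, and splits into the diagonal ($m_1=m_2$, contributing the arc length $1/(qQ)$ times $\sum p_q(m)^2$) and the off-diagonal (where $\int e(\beta(m_1-m_2))\,\dif\beta \ll |m_1-m_2|^{-1}$), after which \eqref{pqbound} finishes the job. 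Both methods ultimately rest only on $p_q(n) \ll N^\varepsilon/q$ and lose nothing here; the paper's expansion is more elementary and self-contained, while yours has the aesthetic advantage of reusing the same Lemma~\ref{mikawalemma1} that drives the more delicate estimates for $B^2$ and $C^2$, making the treatment of the three quantities uniform. Your hypothesis check $2 < qQ < N/2$ and the final summation over $q \le Q_1$ with $\varphi(q) \le q$ are all in order.
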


\begin{proof}
Expanding $|a|^2$ and integrating, we obtain
$$
 A^2 \ll  \sum_{q \leq Q_1} \frac{1}{qQ} \sum_{\substack{1 \leq a \leq
     q \\ (a,q) =1}} \sum_{N < m \leq 2N} p^2_q(m) + \sum_{q \leq Q_1}
 \sum_{\substack{1 \leq a \leq q \\ (a,q) =1}} \mathop{\sum_{N < m_1
     \leq 2N} \sum_{N < m_2 \leq 2N}}_{m_1 \neq m_2} \left|
   \frac{p_q(m_1) p_q(m_2)}{m_1-m_2} \right|. 
$$
Now inserting the estimate \eqref{pqbound}, we easily 
arrive at our desired result.
\end{proof}

\begin{lemma} \label{Bsize}
Let $\delta>0$ be sufficiently small. Then there exists $\delta_5 >0$ depending on $\delta$ such that
$B^2 \ll N^{1-\delta_5}.$
\end{lemma}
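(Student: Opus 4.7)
The plan is to expand $|b|^{2}$ directly from definition \eqref{bdef}, estimate the Gauss sums trivially by $|\tau(\overline{\chi})|\le\sqrt{q^{*}}$, apply Cauchy--Schwarz twice (once over the characters $\chi$ mod $q^{*}$ and once over the divisors $k\mid q$) to decouple the terms, and then reduce the remaining object to a mean square in $t$ of a short twisted divisor sum, which can be treated by the Sobolev--Gallagher inequality (Lemma \ref{mikawalemma1}) combined with Ivi\'c's mean square estimate (Lemma \ref{mikawalemma2}).

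Writing $q^{*}=q/k$ and setting
$$
T(k,\chi,\beta):=\sum_{\substack{N<n\le 2N\\ k\mid n}}\chi(n/k)\,d_{3}(n)\,e(\beta n),
$$
one has $b=\sum_{k\mid q}\varphi(q^{*})^{-1}\sum_{\chi\neq\chi_{0,q^{*}}}\tau(\overline{\chi})\chi(a)T(k,\chi,\beta)$. Using the Gauss-sum bound, the two Cauchy--Schwarz applications, and $q^{*}/\varphi(q^{*})\ll N^{\varepsilon}$, I would get
$$
|b|^{2}\ll N^{\varepsilon}\sum_{k\mid q}\sum_{\substack{\chi\bmod q^{*}\\ \chi\neq\chi_{0,q^{*}}}}|T(k,\chi,\beta)|^{2}.
$$
Since the right-hand side is independent of $a$, the sum over $(a,q)=1$ only contributes a factor $\varphi(q)\le q$. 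For each individual integral $\int_{|\beta|\le 1/(qQ)}|T(k,\chi,\beta)|^{2}\dif\beta$ I would apply Lemma \ref{mikawalemma1} with $\Delta=qQ$, and then perform the change of variable $n=mk$, $t=sk$ to convert the main term into
$$
k\int_{N/k}^{2N/k}\Bigl|\sum_{s<m\le s+qQ/(2k)}\chi(m)\,d_{3}(mk)\Bigr|^{2}\dif s.
$$
This is precisely the form handled by Lemma \ref{mikawalemma2} (with the ambient $N$ replaced by $N/k$ and the length $\Delta$ by $qQ/(2k)$). Since $\chi\neq\chi_{0,q^{*}}$ is non-principal the subtracted residue term vanishes, so Lemma \ref{mikawalemma2} gives a bound of $(qQ/k)^{2}(N/k)^{1-\delta_{3}}$; the auxiliary term of Lemma \ref{mikawalemma1} contributes only $qQ\cdot N^{\varepsilon}$. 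Hence
$$
\int_{|\beta|\le 1/(qQ)}|T(k,\chi,\beta)|^{2}\dif\beta\ll\frac{N^{1-\delta_{3}}}{k^{2-\delta_{3}}}+qQ\cdot N^{\varepsilon}.
$$

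Finally, I would sum: over $\chi\neq\chi_{0,q^{*}}$ (factor $\le q^{*}$); over $k\mid q$ using $\sum_{k\mid q}q^{*}/k^{2-\delta_{3}}\ll q$ and $\sum_{k\mid q}q^{*}\ll qN^{\varepsilon}$; over $a$ with $(a,q)=1$ (factor $\le q$); and over $q\le Q_{1}=N^{\delta}$. This yields
$$
B^{2}\ll N^{\varepsilon}\bigl(Q_{1}^{3}N^{1-\delta_{3}}+Q_{1}^{4}Q\bigr)\ll N^{1+3\delta-\delta_{3}+\varepsilon}+N^{1/4+4\delta+\varepsilon},
$$
which is $\ll N^{1-\delta_{5}}$ for some $\delta_{5}>0$ once $\delta$ is chosen sufficiently small compared with $\delta_{3}$. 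The main obstacle is the bookkeeping required to verify the hypotheses of Lemma \ref{mikawalemma2} uniformly in $k$ and $q^{*}$ after the change of variable: one has to check that $qQ/(2k)=q^{*}Q/2$ lies in $[(N/k)^{1/6+\eta},(N/k)^{1-\eta}]$ and that the size constraints $k,q^{*}\le(N/k)^{\delta}$ hold for every $k\mid q\le Q_{1}$. The lower bound $q^{*}Q/2\ge Q/2=N^{1/4}/2$ forces a choice $\eta\le 1/12$, while the size constraints force $\delta$ to be sufficiently small relative to the threshold supplied by Lemma \ref{mikawalemma2}; both are easily accommodated.
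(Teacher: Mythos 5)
Your proof is correct and follows essentially the same route as the paper: both expand $B^2$ over the major-arc decomposition, use the trivial Gauss-sum bound $|\tau(\bar\chi)|^2\le q^*$ together with two applications of Cauchy--Schwarz (over $k\mid q$ and over $\chi\bmod q^*$) to decouple, apply Lemma~\ref{mikawalemma1} and the change of variables $n=mk$, $t=sk$, and then invoke Lemma~\ref{mikawalemma2} (with the residue term absent since $\chi$ is non-principal) before summing over $\chi$, $k$, $a$, $q$ to reach $B^2\ll Q_1^{3+\varepsilon}N^{1-\delta_3}+Q_1^{4+\varepsilon}Q$. The only cosmetic difference is that the paper realises the double Cauchy--Schwarz through the explicit factorisation $\varphi(q^*)^{-1}=\sqrt{q^*/\varphi(q^*)}\cdot(\varphi(q^*)q^*)^{-1/2}$, which produces the same $g_q=\sum_{k\mid q}q^*/\varphi(q^*)\ll q^\varepsilon$ factor you obtain; your discussion of the admissibility of the length $q^*Q/2$ for Lemma~\ref{mikawalemma2} is a correct, if slightly more explicit, version of the paper's final remark that $\delta<\min\{\delta_3/3,3/16\}$ suffices.
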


\begin{proof}
By the definition of the major arcs, we have
\[ B^2 = \sum_{q \leq Q_1} \sum_{\substack{1 \leq a \leq q \\ (a,q) =1}}\ \int\limits_{|\beta| \leq 1/(qQ)} \left | \sum_{k\mid q} \frac{1}{\varphi(q^*)} \sum_{\substack{\chi \bmod{q^*}\\ \chi\not=\chi_{0,q^*}}} \tau(\overline{\chi}) \chi (a) \sum_{N/k<m \le 2N/k} \chi (m) d_3(mk) e \left( \beta mk \right) \right|^2 \dif \beta. \]
Writing
\[
\frac{1}{\phi(q^*)} = \sqrt{\frac{q^*}{\phi(q^*)}} \cdot
\frac{1}{\sqrt{\phi(q^*) q^*}}
\] 
and applying Cauchy--Schwarz twice, we obtain
\[ B^2 \ll \sum_{q \leq Q_1} \sum_{\substack{1 \leq a \leq q \\ (a,q) =1}}
g_q \sum_{k\mid q} \sum_{\substack{\chi \bmod{q^*}\\
    \chi\not=\chi_{0,q^*}}} \ \int\limits_{|\beta|\leq 1/(qQ)} \left|
  \sum_{N/k<m \le 2N/k} \chi (m) d_3(mk) e \left( \beta mk \right)
\right|^2 \dif \beta, \]
where
$$
g_q:=\sum_{k\mid q} \frac{q^*}{\varphi(q^*)}\ll q^\ve,
$$
using \eqref{phibound}.  Now applying Lemma \ref{mikawalemma1} with a change of variables, we get
$$
 B^2 \ll \sum_{q \leq Q_1} \sum_{\substack{1 \leq a \leq q \\ (a,q) =1}} g_q \sum_{k\mid q} \sum_{\substack{\chi \bmod{q^*}\\ \chi\not=\chi_{0,q^*}}} \left( \frac{k}{(qQ)^2}  \int\limits_{N/k}^{2N/k} \left| \sum_{x < m \leq x+qQ/(2k)}
 \chi (m) d_3(mk) \right|^2 \dif x 
+ qQ N^{\varepsilon} \right).
$$
Applying Lemma~\ref{mikawalemma2} and summing up all relevant variables, we get the
bound
$$
B^2\ll Q_1^{3+\varepsilon}N^{1-\delta_3}+Q_1^{4+\varepsilon}Q.
$$ 
This is satisfactory if $\delta<\min\{\delta_3/3,3/16\}$.
\end{proof} 

\begin{lemma} \label{Csize}
Let $\delta>0$ be sufficiently small. Then there exists $\delta_6>0$ depending on $\delta$ such that $C^2 \ll N^{1-\delta_6}$.
\end{lemma}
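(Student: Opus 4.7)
The overall strategy mirrors that of Lemma~\ref{Bsize}, but the loss from trivial bounds is compensated by introducing the ``expected main term'' $p_q$ and then exploiting the mean-square asymptotic of Lemma~\ref{mikawalemma2} for the principal character.

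First I would collapse $c$ into a single exponential sum. Substituting $n = mk$ in the first sum of \eqref{cdef} gives
$$
c = \sum_{N<n\le 2N} a_n\, e(\beta n), \qquad a_n := \sum_{\substack{k\mid q\\ k\mid n}} \frac{\mu(q^*)}{\varphi(q^*)}\,\chi_{0,q^*}(n/k)\,d_3(n) \;-\; p_q(n),
$$
and the trivial bound $|a_n|\ll (qN)^\varepsilon$ follows from \eqref{phibound} and \eqref{pqbound}. Since $a_n$ does not depend on $a$, I would rewrite $C^2 = \sum_{q\le Q_1} \varphi(q)\int_{|\beta|\le 1/(qQ)}|c|^2\,\dif\beta$ and then apply the Sobolev--Gallagher inequality (Lemma~\ref{mikawalemma1}) with $\Delta=qQ$ to bound each inner integral by
$$
\frac{1}{(qQ)^2}\int\limits_{N}^{2N}\left|\sum_{t<n\le t+qQ/2} a_n\right|^2\dif t \;+\; qQ\,(qN)^{2\varepsilon}.
$$

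The inner partial sum equals $T(q,t,qQ/2)-\sum_{t<n\le t+qQ/2} p_q(n)$, which by \eqref{T0pqrelation} equals $T(q,t,qQ/2)-T_0(q,t,qQ/2)+O((qN)^\varepsilon/q)$. The core of the argument is then to estimate $\int_N^{2N}|T-T_0|^2\,\dif t$. I would decompose $T-T_0$ into the $k\mid q$ pieces (weighted by $\mu(q^*)/\varphi(q^*)$), apply Cauchy--Schwarz in $k$ at the cost of $d(q)$, and for each fixed $k$ change variables $x=ky$ to land in $[N/k,2N/k]$ at the cost of a Jacobian $k$. Lemma~\ref{mikawalemma2} applied to the principal character $\chi_{0,q^*}$ with parameters $(k,q^*,qQ/(2k),N/k)$ then controls each piece by $(qQ/(2k))^2(N/k)^{1-\delta_3}$; combining with $\varphi(q^*)\gg q^*/\log\log(10q^*)$ from \eqref{phibound} yields
$$
\int\limits_{N}^{2N} |T(q,t,qQ/2)-T_0(q,t,qQ/2)|^2 \dif t \ll Q^2\, q^{\delta_3+\varepsilon}\, N^{1-\delta_3}.
$$

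Feeding this back and summing, the principal contribution to $C^2$ is $\ll \sum_{q\le Q_1} q\cdot q^{-2+\delta_3+\varepsilon} N^{1-\delta_3}\ll Q_1^{\delta_3+\varepsilon} N^{1-\delta_3}$, and the two remaining contributions (from the $O((qN)^\varepsilon/q)$ remainder and from the $qQ\sup|a_n|^2$ term of Lemma~\ref{mikawalemma1}) are at most $N^{1/2+\varepsilon}+Q_1^3 Q N^{\varepsilon}$. Choosing $\delta$ small relative to $\delta_3$ then gives $C^2\ll N^{1-\delta_6}$ for some $\delta_6>0$. The main obstacle is verifying the range hypothesis $(N/k)^{1/6+\eta}\le qQ/(2k)\le (N/k)^{1-\eta}$ of Lemma~\ref{mikawalemma2} uniformly for all $k\mid q\le Q_1$; since $Q=N^{1/4}$ and $Q_1=N^\delta$, this reduces to the pair of inequalities $\eta<1/12$ and $1/4+\delta<(1-\delta)(1-\eta)$, both of which are ensured by shrinking $\delta$ and $\eta$.
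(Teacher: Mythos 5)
Your proposal is correct and takes essentially the same route as the paper's proof: collapse $c$ into a single exponential sum via $n=mk$, apply the Sobolev--Gallagher bound (Lemma~\ref{mikawalemma1}) with $\Delta=qQ$, replace the $p_q$-sum by $T_0$ via \eqref{T0pqrelation}, apply Cauchy--Schwarz over $k\mid q$, and invoke Lemma~\ref{mikawalemma2} for the principal character $\chi_{0,q^*}$ after the change of variables $t=ky$. The only differences are cosmetic bookkeeping: the paper uses $\sum_{k\mid q}\mu^2(q^*)/\varphi^2(q^*)\ll 1$ instead of paying a $d(q)$ factor in Cauchy--Schwarz, and it settles for the cruder uniform bound $I(q,a)\ll N^{1-\delta_3}$ before summing trivially over the $\ll Q_1^2$ pairs $(q,a)$, whereas you track the $q$-dependence more carefully to get $Q_1^{\delta_3+\varepsilon}N^{1-\delta_3}$ -- both versions close with $\delta$ sufficiently small.
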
 

\begin{proof}
First we observe that
\[ \sum_{k\mid q} \frac{\mu(q^*)}{\varphi(q^*)} \sum_{N/k<m \le 2N/k} 
\hspace{-0.2cm}
\chi_{0,q^*}(m) d_3(mk) e \left( \beta mk \right) = \sum_{N < n \leq 2N} \sum_{k\mid  (n,q)} \frac{\mu(q^*)}{\varphi(q^*)} \chi_{0,q^*} \left( \frac{n}{k} \right) d_3(n) e ( \beta n). \]
Therefore, inserting the above into \eqref{cdef}, we get that
$$
c = \sum_{N < n \leq 2N} \left( a_n d_3(n) - p_q(n) \right) e (\beta n),
$$
where
\begin{equation} \label{acoeffdef}
a_n = \sum_{k\mid  (n,q)} \frac{\mu(q^*)}{\varphi(q^*)} \chi_{0,q^*} \left( \frac{n}{k} \right) .
\end{equation}
Hence we have
\begin{equation} \label{Csqalt}
 C^2 = \sum_{q \leq Q_1} \sum_{\substack{1 \leq a \leq q \\ (a,q)=1}} I(q,a),
 \end{equation}
with 
$$
I(q,a):=\int\limits_{|\beta| < 1/(qQ)} \left| \sum_{N < n \leq 2N} \left( a_n d_3(n) - p_q(n) \right) e (\beta n) \right|^2 \dif \beta.
$$
Lemma~\ref{mikawalemma1} yields
\begin{equation} \label{applymikawa}
\begin{split}
I(q,a)
& \ll \frac{1}{(qQ)^2} \int\limits_N^{2N} \left| \sum_{t < n \leq t+qQ/2} \left( a_n d_3(n) - p_q(n) \right) \right|^2 \dif t + qQ \left( \sup_{N < n \leq 2N} |a_n d_3(n) - p_q(n) | \right)^2 \\
& \ll \frac{1}{(qQ)^2} \int\limits_N^{2N} \left| \sum_{t < n \leq t+qQ/2} \left( a_n d_3(n) - p_q(n)  \right) \right|^2 \dif t + q Q N^{\varepsilon},
\end{split}
\end{equation}
where the last estimate comes from using \eqref{pqbound} and $a_n \ll n^{\varepsilon}$.  Employing \eqref{T0pqrelation}, we have
\begin{align*}
\left| \sum_{t < n \leq t+qQ/2} (a_n d_3(n) - p_q(n)) \right|^2 & = \left| \sum_{t < n \leq t+qQ/2} a_n d_3(n) - T_0 \left( q,t, \frac{qQ}{2} \right) + O \left( \frac{(qN)^{\varepsilon}}{q} \right) \right|^2 \\
 & \ll \left| \sum_{t < n \leq t+qQ/2} a_n d_3(n) - T_0 \left( q,t,
     \frac{qQ}{2} \right) \right|^2 + O \left(
   \frac{(qN)^{\varepsilon}}{q^2} \right).
 \end{align*}
Note that $\sum_{k\mid q} \mu^2(q^*)/\varphi^2(q^*) \ll 1$.
Now using \eqref{T0}, \eqref{acoeffdef} and Cauchy--Schwarz, we deduce that the first term in the last line is
$$
\ll \sum_{k\mid q} \left| \sum_{t/k< m \leq t/k + qQ/(2k)} \chi_{0,q^*}
 (m) d_3 (mk) - \Res_{s=1} \frac{((t+qQ/2)/k)^s - (t/k)^s) F_{k,q^*}(s)}{s}
 \right|^2. 
$$
Reinserting our work back into \eqref{applymikawa}, we see after a
change of variables that
\begin{align*}
 I(q,a)\ll~&
\frac{q^{\varepsilon}N^{1+\varepsilon}}{q^{4}Q^2} + qQ N^{\varepsilon}+
\sum_{k\mid q} \frac{k}{(q Q)^2} \\
&\times 
\int\limits_{N/k}^{2N/k} \left| \sum_{x< m
    \leq x+qQ/(2k)}  \chi_{0,q^*} (m) d_3(mk) - \Res_{s=1}
  \frac{((x+qQ/(2k))^s - x^s) F_{k,q^*}(s)}{s} \right|^2 \dif t.
\end{align*}
We are now in a position to apply Lemma~\ref{mikawalemma2} to the
 integral on the right-hand side.  This gives 
$$
I(q,a)\ll 
N^{1-\delta_3} + \frac{N^{1+\varepsilon}}{q^{4}Q^2} + qQ
N^{\varepsilon} \ll N^{1-\delta_3}, 
$$
since $q\leq N^{\delta}\le N^{1/4}$ and $Q = N^{1/4}$.  Now inserting the
above estimate into \eqref{Csqalt} and summing up all the relevant
variables, we arrive at our desired result if $\delta<\delta_3/2$. 
\end{proof}

From \eqref{majarc} and Lemmas \ref{Asize}, \ref{Bsize} and
\ref{Csize}, we obtain the following result.

\begin{lemma}\label{majorlemma} Let $\delta>0$ be sufficiently
  small. Then there exists $\delta_7>0$ depending on $\delta$ such
  that uniformly for $h$, we have 
$$
\int\limits_{\majorarc} \left| S(\alpha) \right|^2 e ( -h \alpha) \dif \alpha = \int\limits_{\majorarc} \left| a \right|^2 e ( -h \alpha) \dif \alpha + O( N^{1-\delta_7}).
$$
\end{lemma}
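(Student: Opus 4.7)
My plan is to read off the claim directly from the Cauchy--Schwarz decomposition \eqref{majarc}, combined with the three bounds in Lemmas \ref{Asize}, \ref{Bsize} and \ref{Csize}. All of the substantive work — the orthogonality trick converting additive to multiplicative characters, the approximation of the principal-character part by the arithmetic density $p_q$, and the application of Lemmas \ref{mikawalemma1} and \ref{mikawalemma2} — has already been carried out in the derivation of \eqref{adef}--\eqref{cdef} and in Lemmas \ref{Asize}--\ref{Csize}. The present assertion is therefore essentially a bookkeeping step.

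More precisely, Lemmas \ref{Asize}, \ref{Bsize} and \ref{Csize} supply
\[
A \ll N^{1/2+\varepsilon},\qquad B \ll N^{(1-\delta_5)/2},\qquad C \ll N^{(1-\delta_6)/2}.
\]
Setting $\delta_0 := \min(\delta_5,\delta_6)$, the cross term in \eqref{majarc} satisfies $A(B+C) \ll N^{1-\delta_0/2+\varepsilon}$, while the diagonal contribution satisfies $B^2 + C^2 \ll N^{1-\delta_0}$. Choosing, for instance, $\delta_7 := \delta_0/3$ and insisting that $\varepsilon < \delta_0/6$ (which is permissible since the implied constants in Lemmas \ref{Asize}--\ref{Csize} do not depend on $\varepsilon$ once it is fixed), both error contributions are $\ll N^{1-\delta_7}$. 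Since $\delta_5$ and $\delta_6$ depend only on $\delta$, so does $\delta_7$.

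The final point to check is the claimed uniformity in $h$. Tracing back the derivation of \eqref{majarc}, one sees that after expanding $|a+b+c|^2 = |a|^2 + 2\operatorname{Re}(\bar a(b+c)) + |b+c|^2$, the two error terms were bounded by $A(B+C)$ (via Cauchy--Schwarz over $\majorarc$) and by $B^2 + C^2$, respectively, and in both estimates the unimodular phase $e(-h\alpha)$ was dropped. Consequently no $h$-dependence enters the error, giving the uniformity as stated.

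In short, I do not anticipate a genuine obstacle: the lemma is a direct consequence of packaging together the three size bounds already established, and the main conceptual content lives entirely in those earlier lemmas.
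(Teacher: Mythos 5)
Your proposal is correct and is essentially the paper's argument: the paper itself derives Lemma~\ref{majorlemma} simply by combining the Cauchy--Schwarz decomposition \eqref{majarc} with Lemmas~\ref{Asize}, \ref{Bsize} and \ref{Csize}, and you have merely spelled out the bookkeeping (choice of $\delta_7$, handling of $\varepsilon$, and $h$-uniformity) that the paper leaves implicit.
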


We now turn to the computation of
\begin{equation} \label{Zhdef}
Z(h) := \int\limits_{\majorarc} |a|^2 e(-h\alpha) \dif \alpha,
\end{equation}
where $a$ is given by \eqref{adef}.
By the definition of the major arcs, we have
\begin{align*}
Z(h) & = \sum_{q \le Q_1} \sum_{\substack{1 \le a \le q \\ (a,q)=1}}
\int\limits_{|\beta| \le 1/qQ} \left| \sum_{N<m \le 2N} p_{q} (m) e \left( \beta m \right) \right|^2
e\left(-h\left(\frac{a}{q}+\beta\right)\right) \dif \beta \\
& = \sum_{q \le Q_1} c_q(-h) \int\limits_{|\beta| \le 1/qQ} \left| \sum_{N<m \le 2N} p_{q} (m) e \left( \beta m\right) \right|^2
e(-h\beta) \dif \beta,
\end{align*}
where $c_q(m)$ is the Ramanujan sum.

Expanding the square in our expression for $Z(h)$ and using \eqref{pqdef}, we have
\begin{equation} \label{Zh}
\begin{split} 
  Z(h)  
=~& \sum_{q \le Q_1} c_{q}(-h) \sum\limits_{k_1\mid q} \sum\limits_{k_2\mid q}
\frac{\mu(q/k_1)\mu(q/k_2)}{\varphi(q/k_1)k_1\varphi(q/k_2)k_2}
\\ &
\times  
  \int\limits_{|\beta| \le 1/qQ}  \sum_{N<n_1 \le 2N} \sum_{N<n_2 \le
    2N} p_{k_1,q/k_1}\left(\frac{n_1}{k_1}\right)
  p_{k_2,q/k_2}\left(\frac{n_2}{k_2}\right) 
  e(\beta(n_1-n_2-h)) \dif \beta \\
   =~& \sum_{q \le Q_1} c_{q}(-h) \sum\limits_{k_1\mid q}
  \sum\limits_{k_2\mid q}\frac{\mu(q/k_1)\mu(q/k_2)}{\varphi(q/k_1)k_1\varphi(q/k_2)k_2} \left\{ \int\limits_0^1  
    \cdots \dif \beta - \int\limits_{1/qQ}^{1-1/qQ} \cdots \dif \beta
  \right\} \\ 
   =~& \Sigma_1(h) - \Sigma_2(h), 
\end{split}
\end{equation}
say. It easily follows that
\begin{equation} \label{sigma1}
\begin{split}
\Sigma_1(h)=~&
\sum\limits_{q\le Q_1}  c_q(-h) \sum\limits_{k_1\mid q} \sum\limits_{k_2\mid q}
\frac{\mu(q/k_1)\mu(q/k_2)}{\varphi(q/k_1)k_1\varphi(q/k_2)k_2} \\
& \times 
\sum\limits_{N+h<n\le N} p_{k_1,q/k_1}\left(\frac{n}{k_1}\right) p_{k_2,q/k_2}\left(\frac{n-h}{k_2}\right).
\end{split}
\end{equation}
  
Next we turn to the estimation of 
\begin{align*} 
\Sigma_2 (h)  
=~& \sum_{q \le Q_1} c_{q}(-h) \sum\limits_{k_1\mid q} \sum\limits_{k_2\mid q}
\frac{\mu(q/k_1)\mu(q/k_2)}{\varphi(q/k_1)k_1\varphi(q/k_2)k_2}
\\ &\times 
\int\limits_{1/qQ}^{1-1/qQ}\left( \sum\limits_{N<n_1 \le 2N} p_{k_1,q/k_1}\left(\frac{n_1}{k_1}\right) e(\beta n_1) \sum\limits_{N<n_2\le 2N}
p_{k_2,q/k_2}\left(\frac{n_2}{k_2}\right) e(-\beta n_2)\right) e(-\beta h) \dif \beta.
\end{align*}
Using partial summation, \eqref{pkqbound} and the familiar bound
\begin{equation} \label{geom}
  \sum_{s < n \le t} e(\beta n) \ll \| \beta \|^{-1},
\end{equation}
where $\| \alpha \|$ is the distance of $\alpha$ to the nearest
integer, we obtain the estimate   
$$
\sum_{N<n \le 2N} p_{k,q/k}\left(\frac{n}{k}\right) e(\pm \beta n) \ll (qN)^{\varepsilon} \| \beta \|^{-1}.
$$
Since $|c_q(-h)|\le\varphi(q)$, it follows that
 $
 \Sigma_2 (h)\ll N^{\varepsilon}Q_1Q\ll N^{3/4},
$
since $\delta< 1/4$.
Combining this with \eqref{Zh}, we obtain
\begin{equation} \label{Zhapp}
Z(h)=\Sigma_1(h)+O(N^{3/4}),
\end{equation}
uniformly for $h\in \mathbb{N}$.

\section{Computation of the singular series}\label{s:SS}

We now show that our main term $\Sigma_1(h)$ in \eqref{sigma1}
can be approximated by the integral on the right-hand side
of the estimate in Proposition \ref{majorarcstheo}. Throughout this
section, we assume that 
$q\leq N^{\delta}$ and $k_i\mid q$ for $i=1,2$, 
and that $0<\delta<1/4$ and $0<\eta<1$.
In the following, we shall
frequently make use of \eqref{pkqbound}, \eqref{phibound} and
the inequality 
$|c_q(-h)|\le (q,h)$ without further mention.

The innermost sum on the right-hand side of \eqref{sigma1} is
\begin{align*}
\sum\limits_{N+h<n\le 2N} 
&p_{k_1,q/k_1}\left(\frac{n}{k_1}\right)
p_{k_2,q/k_2}\left(\frac{n-h}{k_2}\right)\\ 
&= \sum\limits_{N<n\le 2N} p_{k_1,q/k_1}\left(\frac{n}{k_1}\right) p_{k_2,q/k_2}\left(\frac{n-h}{k_2}\right)+O(hN^{\varepsilon})\\
&= \sum\limits_{N<n\le 2N} p_{k_1,q/k_1}\left(\frac{n}{k_1}\right)
p_{k_2,q/k_2}\left(\frac{n}{k_2}\right)+O(hN^{\varepsilon})\\ 
&= \int\limits_{N}^{2N} p_{k_1,q/k_1}\left(\frac{x}{k_1}\right) p_{k_2,q/k_2}\left(\frac{x}{k_2}\right) \dif x+O(hN^{\varepsilon}).
\end{align*}
It follows that
$$
\Sigma_1(h) = \sum\limits_{q\le Q_1} \frac{c_q(-h)}{q^2} \cdot \int\limits_{N}^{2N} \left(\sum\limits_{k\mid q} \frac{\mu(q/k)q}{\varphi(q/k)k} \cdot p_{k,q/k}\left(\frac{x}{k}\right)\right)^2 \dif x+ O\left(N^{\varepsilon}\sum\limits_{q=1}^{\infty} \frac{h\cdot (q,h)}{q^2}\right). 
$$
We note that uniformly for $h\le N^{1-\eta}$, we have
$$
N^{\varepsilon}\sum\limits_{q=1}^{\infty} \frac{h\cdot (q,h)}{q^2} \ll
N^{1-\delta_{8}}
$$
for some $\delta_8>0$ depending on $\eta$, if 
$2\varepsilon<\eta$. Moreover, we can extend 
to infinity the sum over $q\le Q_1$ in the main term, with
acceptable error depending on $\delta$ and $\eta$.
Combining everything, we obtain
$$
\Sigma_1(h)=\int\limits_{N}^{2N} \mathfrak{S}^*(x,h)\dif x +
O(N^{1-\delta_{9}}), 
$$
where $\delta_9$ depends on $\eta$ and $\delta$, and
$$
\mathfrak{S}^*(x,h):=\sum\limits_{q=1}^{\infty} \frac{c_q(-h)}{q^2}
\cdot \left(\sum\limits_{k\mid q} \frac{\mu(q/k)q}{\varphi(q/k)k} \cdot
  p_{k,q/k}\left(\frac{x}{k}\right)\right)^2. 
$$

We proceed to show that 
\begin{equation} \label{singularequal}
\mathfrak{S}^*(x,h)=\mathfrak{S}(x,h), 
\end{equation}
where the right-hand side is defined as in \eqref{eq:dijon}.
To begin with we write
$$
\mathfrak{S}^*(x,h)=\sum\limits_{q=1}^{\infty} \frac{c_q(-h)}{q^2}
\cdot P^*(x,q)^2, 
$$
where 
\begin{equation} \label{P3}
P^*(x,q):=\sum\limits_{d\mid q} \frac{\mu(d)d}{\varphi(d)} \cdot
p_{q/d,d}\left(\frac{xd}{q}\right). 
\end{equation}
In particular, it follows from \eqref{pkqbound}
that 
\begin{equation}
  \label{eq:P3-upper}
P^*(x,q)\ll (qx)^\ve,
\end{equation}
which is not of importance in the rest of this section but was used in section \ref{s:F}.
Recalling the definition of $p_{k,q^*}$ from \eqref{pkqdef}, we have
\begin{align*}
p_{q/d,d}(y)
&=\frac{\dif}{\dif t} \left. \Res_{s=1} \frac{t^s F_{q/d,d}(s)}{s}
\right|_y\\
&= \Res_{s=1} \left( \left.\left(\frac{\dif}{\dif t} \frac{t^s}{s}\right)\right|_y \cdot F_{q/d,d}(s)\right)\\
&=  \Res_{s=1}\ y^{s-1} \cdot F_{q/d,d}(s). 
\end{align*}
Making the change of variables $s\rightarrow s+1$, we obtain
\begin{align*}
P^*(x,q) &= \sum\limits_{d\mid q} \frac{\mu(d)d}{\varphi(d)} \cdot \Res_{s=1} \left(\frac{xd}{q}\right)^{s-1} \cdot F_{q/d,d}(s)\\
&= \Res_{s=0} \sum\limits_{d\mid q} \frac{\mu(d)d^{s+1}x^s}{\varphi(d)q^s} \cdot F_{q/d,d}(s+1). 
\end{align*}
Hence
$$
P^*(x,q)=\Res_{s=0}\ \zeta^3(s+1)H^*(s+1,q)\cdot \left(\frac{x}{q}\right)^s,
$$
where
$$
H^*(s,q):=\sum\limits_{d\mid q} \frac{\mu(d)}{\varphi(d)} \cdot d^s\cdot  G^*_{q/d,d}(s)
$$
and 
$$
G^*_{q/d,d}(s):=\frac{F_{q/d,d}(s)}{\zeta^3(s)}.
$$

For the proof of \eqref{singularequal}, it remains to show that 
$
G^*_{q/d,d}(s)=G_{q/d,d}(s),
$
in the notation of \eqref{Gkddef}. It suffices
to check 
this equation for prime powers $q=p^\alpha$, for $\alpha\in \NN$.
We recall \eqref{factorization}, \eqref{Bprinc} and \eqref{Fkqdef}.

{\it Case 1}: If $d=1$, then
$$
G_{q/d,d}^*(s) = G_{q,1}^*(s)
=  \frac{F_{p^{\alpha},1}(s)}{\zeta^3(s)}
= \left(1-p^{-s}\right)^3 \sum\limits_{j=0}^{\infty} \frac{d_3\left(p^{j+\alpha}\right)}{p^{js}} = G_{q,1}(s) = G_{q/d,d}(s).
$$

{\it Case 2}: If $d=p^{\alpha}$, then
$$
G^*_{q/d,d}(s)=G^*_{1,q}(s)=\frac{F_{1,p^{\alpha}}(s)}{\zeta^3(s)}=\left(1-p^{-s}\right)^3=G_{1,q}(s)=G_{q/d,d}(s).  
$$

{\it Case 3}: If $d=p^{\beta}$ with $1\le \beta\le \alpha-1$, then
$$
G^*_{q/d,d}(s) =
\frac{F_{p^{\alpha-\beta},p^{\beta}}(s)}{\zeta^3(s)}=\left(1-p^{-s}\right)^3\cdot
d_3(p^{\alpha-\beta})= G_{q/d,d}(s). 
$$

In this way we see that $G_{q/d,d}(s)$ and $G^*_{q/d,d}(s)$ match up in all
cases. Combining the facts in this section, we obtain the following
estimate. 

\begin{lemma} \label{Sigma1lemma} 
There exists $\delta_{10}>0$ depending on $\eta$ and $\delta$ such
that, uniformly for $h\le N^{1-\eta}$, we have 
$$
\Sigma_1(h)=\int\limits_{N}^{2N} \mathfrak{S}(x,h) \dif x +
O(N^{1-\delta_{10}}). 
$$
\end{lemma}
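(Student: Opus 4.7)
The plan is to follow the chain of reductions already set up in equations \eqref{sigma1}--\eqref{P3}, dealing with each step in turn and then identifying the resulting expression with $\int_N^{2N}\mathfrak{S}(x,h)\,dx$.

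First I would start from \eqref{sigma1} and focus on the innermost sum $\sum_{N+h<n\le 2N} p_{k_1,q/k_1}(n/k_1) p_{k_2,q/k_2}((n-h)/k_2)$. Using the derivative bound in \eqref{pkqbound} together with the mean value theorem, the replacement of $p_{k_2,q/k_2}((n-h)/k_2)$ by $p_{k_2,q/k_2}(n/k_2)$ costs $O(h N^{\varepsilon-1})$ per term, hence $O(hN^{\varepsilon})$ after summing over $n$; likewise, adding back the $h$ missing terms at the lower end of summation costs $O(hN^\varepsilon)$, and passing from the sum to the integral $\int_N^{2N} p_{k_1,q/k_1}(x/k_1) p_{k_2,q/k_2}(x/k_2)\,dx$ costs another $O(hN^\varepsilon)$ by standard Euler--Maclaurin (using again \eqref{pkqbound}). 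Rearranging the divisor sums, I obtain
\[
\Sigma_1(h) = \sum_{q\le Q_1} \frac{c_q(-h)}{q^2}\int_N^{2N}\Bigl(\sum_{k\mid q}\frac{\mu(q/k)q}{\varphi(q/k)k}\,p_{k,q/k}(x/k)\Bigr)^{\!2}dx + O\Bigl(N^\varepsilon \sum_{q=1}^\infty \frac{h(q,h)}{q^2}\Bigr).
\]
The error term factors as $h N^\varepsilon \sum_{d\mid h} d^{-1} \sum_{q\equiv 0(d)} q^{-2} \ll hN^\varepsilon d_1(h)$, which for $h\le N^{1-\eta}$ is $O(N^{1-\delta_8})$ provided $\varepsilon$ is chosen small in terms of $\eta$.

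Next I would extend the sum over $q$ from $q\le Q_1$ to all $q\ge 1$. Using \eqref{pkqbound} and \eqref{phibound} the inner square is $O((qx)^\varepsilon)$, so the tail $\sum_{q>Q_1}(q,h)/q^{2-\varepsilon}$ contributes $O(N^{1+\varepsilon}Q_1^{-1+\varepsilon}(qh)^\varepsilon) = O(N^{1-\delta})$ after recalling $Q_1 = N^\delta$. Absorbing this into the error yields
\[
\Sigma_1(h) = \int_N^{2N} \mathfrak{S}^*(x,h)\,dx + O(N^{1-\delta_9}),
\]
with $\mathfrak{S}^*(x,h) = \sum_{q\ge 1}\frac{c_q(-h)}{q^2}P^*(x,q)^2$ and $P^*(x,q)$ as in \eqref{P3}.

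It then remains to verify $\mathfrak{S}^*(x,h) = \mathfrak{S}(x,h)$, which reduces to the identity $P^*(x,q) = P(x,q)$. For this I would convert $p_{q/d,d}(y)$ via \eqref{pkqdef} into $\operatorname{Res}_{s=1} y^{s-1} F_{q/d,d}(s)$, substitute into \eqref{P3}, shift $s\mapsto s+1$, and factor out $\zeta^3(s+1)$ using \eqref{factorization} and \eqref{Bprinc}. This yields
\[
P^*(x,q) = \operatorname{Res}_{s=0}\zeta^3(s+1) H^*(s+1,q)(x/q)^s
\]
with $H^*$ defined as $H$ but with $G_{q/d,d}$ replaced by $G^*_{q/d,d}(s) = F_{q/d,d}(s)/\zeta^3(s)$. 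The main task, and the one I expect to be slightly fiddly, is verifying $G^*_{k,d}(s) = G_{k,d}(s)$. By multiplicativity it suffices to check this for $q = p^\alpha$ and each divisor $d\in\{1,p,\ldots,p^\alpha\}$. In each case I would compute $F_{q/d,d}(s)$ using the Euler product decomposition \eqref{Afact}--\eqref{Bfact} and compare with the definition \eqref{Gkddef}, splitting into the three cases $d=1$, $d=p^\alpha$, and $1\le v_p(d)\le \alpha-1$ (in the last two cases the local factor at $p$ collapses because $\chi_{0,d}(p)=0$ kills the infinite sum and only the $j=0$ term in \eqref{Afact} survives). Once each case matches, $H^* = H$ and hence $P^* = P$, giving $\mathfrak{S}^* = \mathfrak{S}$ and completing the lemma with $\delta_{10}=\min(\delta_8,\delta_9,\delta)$.
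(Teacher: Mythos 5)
Your proposal is correct and follows essentially the same route as the paper: replacing the shifted argument using \eqref{pkqbound}, converting the sum to an integral, bounding the error via $\sum_q (q,h)/q^2$, extending the $q$-range using \eqref{eq:P3-upper}, and then verifying $P^*=P$ by the residue manipulation and the prime-power case check $G^*_{k,d}=G_{k,d}$. The three-case analysis (including the observation that $\chi_{0,d}(p)=0$ collapses the local factor when $p\mid d$) is precisely the paper's argument.
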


Combining Lemma \ref{majorlemma}, \eqref{Zhdef}, \eqref{Zhapp} and
Lemma \ref{Sigma1lemma} proves Proposition \ref{majorarcstheo}.

\section{Treatment of the minor arcs}
This last section is concerned with the proof of Proposition
\ref{minorarcstheo}, following precisely Mikawa's treatment. Expanding
the square,  re-arranging the order of summation and integration, and
using the  bound \eqref{geom}, we have 
\begin{equation} \label{homer}
\sum\limits_{h\le H} \left| \int\limits_{\mathfrak{m}} |S(\alpha)|^2 e(-\alpha h) \dif \alpha \right|^2 \ll \int\limits_{\mathfrak{m}}\int\limits_{\mathfrak{m}} |S(\alpha_1)|^2|S(\alpha_2)|^2
\min\left(H,\frac{1}{\| \alpha_1-\alpha_2 \|}\right)\dif \alpha_1\dif\alpha_2.
\end{equation}
Set $\Delta:=HN^{-\delta_{11}}$ with $0<\delta_{11}<\eta$.  We split
the right-hand side of \eqref{homer} into $I_1+I_2$, with
\begin{align*}
I_1
&:=\int\limits_{\mathfrak{m}} \int\limits_{\substack{\mathfrak{m}\\ |\alpha_2-\alpha_1|> 1/\Delta}} |S(\alpha_1)|^2|S(\alpha_2)|^2
\min\left(H,\frac{1}{\| \alpha_1-\alpha_2 \|}\right)\dif \alpha_2\dif \alpha_1,\\
I_2&:=\int\limits_{\mathfrak{m}} \int\limits_{\substack{\mathfrak{m}\\ |\alpha_2-\alpha_1|\le 1/\Delta}} |S(\alpha_1)|^2|S(\alpha_2)|^2
\min\left(H,\frac{1}{\| \alpha_1-\alpha_2 \|}\right) \dif \alpha_2\dif \alpha_1.
\end{align*}

Using orthogonality and the estimate $d_3(n)\ll n^{\varepsilon}$, we
see that
\begin{equation} \label{I1bound}
I_1\ll HN^{-\delta_{11}}\left(\int\limits_0^1 |S(\alpha)|^2 \dif \alpha\right)^2 \ll HN^{2-\delta_{11}/2}.
\end{equation}
Furthermore, we have
\begin{equation} \label{I2bound}
I_2\ll H\int\limits_{\mathfrak{m}} |S(\alpha)|^2 \left( \ \int\limits_{|\beta|\le 1/\Delta} |S(\alpha+\beta)|^2\dif \beta\right)\dif \alpha.
\end{equation}
In view of Lemma \ref{mikawalemma1},
the inner integral here is 
$$
\int\limits_{|\beta|\le 1/\Delta} |S(\alpha+\beta)|^2\dif \beta \ll
\Delta^{-2}\int\limits_{N}^{2N} \left|\sum\limits_{t<n\le t+\Delta/2}
  d_3(n)e(\alpha n)\right|^2 \dif t + \Delta N^{\varepsilon}. 
$$
Now, by Dirichlet's theorem and the definition of the minor arcs, if $\alpha\in \mathfrak{m}$, there exist $a$ and $q$ such that
$$
\left| \alpha-\frac{a}{q}\right| \le q^{-2}, \quad (a,q)=1, \quad Q_1<q\le Q.
$$
From Lemma \ref{mikawalemma5}, the definitions of $\Delta$, $Q_1$, $Q$
and the assumption $N^{1/3+\eta}\le H\le N^{1-\eta}$ it
now follows, uniformly for $\alpha\in \mathfrak{m}$, that 
$$
\Delta^{-2}\int\limits_{N}^{2N} \left|\sum\limits_{t<n\le t+\Delta/2} d_3(n)e(\alpha n)\right|^2 \dif t\ll N^{1-\delta_{12}},
$$
provided that $\delta_{12}<\min\{\delta/2,\delta_4,\eta-\delta_{11},1/24\}$.
Combining this with \eqref{I2bound}, we therefore obtain
$$
I_2\ll HN^{1-\delta_{12}} \int\limits_0^1 |S(\alpha)|^2 \dif \alpha
\ll  HN^{2-\delta_{12}/2}. 
$$
Proposition \ref{minorarcstheo} now follows on inserting this estimate
into \eqref{homer}, together with \eqref{I1bound}.


\begin{thebibliography}{99}

\bibitem{car}
  R. Carmichael, 
Expansions of arithmetical functions in infinite series.
{\em Proc. London Math. Soc.}  {\bf 34}  (1932),  1--26.


\bibitem{conrey-gonek}
J. B. Conrey and S. M. Gonek, 
High moments of the Riemann zeta-function.  
{\em Duke Math. J.}  {\bf 107}  (2001),  577--604.


\bibitem{dfi}
W. Duke, J. B. Friedlander and H. Iwaniec,
A quadratic divisor problem.
{\em Invent. Math.} {\bf 115} (1994), 209--217.


\bibitem{hl}
G. H. Hardy and J. E. Littlewood, 
Contributions to the theory of the Riemann zeta-function and the
theory of the distribution of primes. {\em Acta Math.} {\bf 41} (1918),
119--196.

\bibitem{ingham}
A. E. Ingham, Mean-value theorems in the theory of the Riemann
zeta-function.
{\em Proc. London Math. Soc.} {\bf 27} (1926), 273--300. 

\bibitem{ing} A. E. Ingham, Some asymptotic formulae in the theory of
  numbers. {\em J. London Math. Soc.} {\bf 2} (1927), 202--208.

\bibitem{ivic} A. Ivi\'c, {\em The Riemann Zeta-function}. John Wiley
  \& Sons, 1985. 

\bibitem{ivic-art}
A. Ivi\'c, 
The general additive divisor problem and moments of the
  zeta-function. 
{\em New trends in probability and statistics, Vol. 4
(Palanga, 1996)}, 69--89, VSP, Utrecht, 1997. 

\bibitem{ivic-art'}
A. Ivi\'c, 
On the ternary additive divisor problem and the sixth moment of the
Riemann zeta-function. 
{\em Sieve methods, exponential sums, and their applications in number
  theory}, 205--243, Cambridge Univ. Press, 1996. 


\bibitem{HIEK} 
H. Iwaniec and E. Kowalski, {\em Analytic Number Theory}. American Math.\ Soc.\ Colloq.\ Pub.\ {\bf 53}, American Math.\ Soc., 2004. 


\bibitem{ks}
J. P. Keating and N. C. Snaith, 
Random matrix theory and $\zeta(1/2+it)$.
{\em Commun. Math. Phys.} {\bf 214} (2000) 57--89.

\bibitem{meurman} T. Meurman, A generalization of Atkinson's formula
  to $L$-functions. {\em Acta Arith.} {\bf 47} (1986), 351--370. 


\bibitem{meurman-d2} T. Meurman,
On the binary additive divisor problem. {\em Number theory
(Turku, 1999)}, 223-–246, de Gruyter, 2001.


\bibitem{mikawa}
H. Mikawa,
On prime twins. 
{\em Tsukuba J. Math.} {\bf 15} (1991), 19--29.



\bibitem{titch}
E. C. Titchmarsh, {\em The theory of the Riemann zeta-function.} 2nd
ed., Oxford Univ. Press, 1986. 


\end{thebibliography}
\end{document}